\providecommand{\institute}[1]{
	\apptocmd{\@author}{\end{tabular}
	\par \small
	\begin{tabular}[t]{c}
		#1}{}{}
}
\newcommand{\smartqed}{}
\newcommand{\journalname}[1]{}
\newcommand{\at}{, }
\newcommand{\email}[1]{\href{mailto:#1}{#1} }
\newcommand{\qedperso}{}
\newtheorem{definition}{Definition}
\newtheorem{remark}{Remark}
\newtheorem{proposition}{Proposition}
\newtheorem{lemma}{Lemma}
\newcommand{\Letter}{\Envelope}
\setlist[enumerate]{leftmargin=10mm}
\journalname{Mathematical Geoscience}
\newcommand{\Esp}[1]{\mathrm{E}\left[#1\right]}
\newcommand{\Var}[1]{\mathrm{V}\left[#1\right]}
\newcommand{\Cov}[1]{\mathrm{Cov}\left[#1\right]}
\newcommand{\norm}[1]{{\Vert{}#1\Vert}}
\newcommand{\abs}[1]{{\vert{}#1\vert}}
\newcommand{\transpose}[1]{{#1}^t}
\newcommand{\sachant}{|}
\newcommand{\R}{\mathds{R}}
\newcommand{\set}[1]{{\lbrace{#1}\rbrace}}
\newcommand{\accolade}[1]{\left\{{\renewcommand{\arraystretch}{1.15}\begin{array}{lcl}#1\end{array}}\right.}
\newcommand{\accoladesplit}[1]{\left\{\begin{aligned}#1\end{aligned}\right.}
\newcommand{\virguleacc}{\rlap{\,,}}
\newcommand{\pointacc}{\rlap{\,.}}
\newcommand{\ie}{i.e.\@ }
\newcommand{\tcpx}[1]{\;}
\newcommand{\M}{M}
\newcommand{\Y}{Y}
\newcommand{\agg}{{\mathcal{A}}}
\newcommand{\full}{{full}}
\newcommand{\vv}{v}
\newcommand{\shorteq}{%
  \settowidth{\@tempdima}{a}
  \, \resizebox{\@tempdima}{\height}{=} \,%
}
\newcommand{\did}[1]{{\color{teal}#1}} 
\newcommand\nicolas[1]{{\color{teal}#1}}
\newcommand\fb[1]{{\color{teal}#1}} 
\renewcommand{\did}[1]{#1}
\renewcommand\nicolas[1]{#1}
\renewcommand\fb[1]{#1}
\newcommand{\NK}[1]{nested Kriging#1}
\newcommand{\MAJNK}[1]{Nested Kriging#1}
\newtheorem{assumption}{Assumption}
\title{Properties and comparison of some Kriging sub-model aggregation methods\thanks{Part of this research was conducted within the frame of the Chair in Applied Mathematics OQUAIDO, which gathers partners in technological research (BRGM, CEA, IFPEN, IRSN, Safran, Storengy) and academia (Ecole Centrale de Lyon, Mines Saint-\'Etienne, University of Nice, University of Toulouse and CNRS) around advanced methods for Computer Experiments. \did{The authors F.Bachoc and D.Rulli\`{e}re acknowledge support from the regional MATH-AmSud program, grant number 20-MATH-03. The authors are grateful to the Editor-in-Chief, an Associate Editor and a referee for their constructive suggestions that lead to an improvement of the manuscript.}}
}
\author{Fran\c{c}ois Bachoc         \and
        Nicolas Durrande  \and
        Didier Rulli\`ere \and
        Cl\'ement Chevalier
}
\institute{Fran\c{c}ois Bachoc (\Letter)\at
              Institut de Math\'ematiques de Toulouse, Universit\'e Paul Sabatier, Toulouse, France \\
              \email{francois.bachoc@math.univ-toulouse.fr} 
           \and
           Nicolas Durrande\at
              Secondmind, Cambridge, UK \\
              \email{nicolas@secondmind.ai} %
           \and
            Didier Rulli\`ere\at
            Mines Saint-Etienne, UCA, CNRS, UMR 6158 LIMOS, Institut Henri Fayol, Saint-Etienne, France\\
            \email{didier.rulliere@emse.fr} %
                         \and
           Cl\'ement Chevalier\at
              Institute of Statistics, University of Neuch\^atel, Neuch\^atel, Switzerland. \\
              \email{clement.chevalier@unine.ch} %
}
\date{}
\begin{document}

\maketitle

\begin{abstract}
Kriging is a widely employed technique, in particular for computer experiments, in machine learning or in geostatistics. An important challenge for Kriging is the computational burden when the data set is large. \nicolas{This article} focuses on a class of methods aiming at decreasing this computational cost, consisting in aggregating Kriging predictors based on smaller data subsets. It proves that aggregation methods \nicolas{that ignore the covariance between sub-models} can yield an inconsistent final Kriging prediction. In contrast, \nicolas{a theoretical study} of the \NK{} method shows additional attractive properties for it: First, this predictor is consistent, second it can be interpreted as an exact conditional distribution for a modified process and third, the conditional covariances given the observations \nicolas{can be computed efficiently}. \nicolas{This article also includes a theoretical and numerical analysis of} how the assignment of the observation points to the sub-models can affect the prediction ability of the aggregated model. \fb{Finally, the \NK{} method} \nicolas{is extended} \fb{to measurement errors and to universal Kriging.}\\
{\bf Keywords:} Gaussian processes, model aggregation, consistency, error bounds, \did{Nested Pointwise Aggregation of Experts, NPAE}.
\end{abstract}

\section{Introduction} \label{section:intro}

Kriging \fb{(\cite{krige1951statistical, matheron70theorie}, see also \citep{Cre1993,stein2012interpolation,santner2013design})} consists in inferring the values of a Gaussian random field given observations at a finite set of observation points.
It has become a popular method for a
large range of applications, such as geostatistics \citep{matheron70theorie}, numerical code approximation \citep{sacks89design,santner2013design,bachoc16improvement}, global optimization \citep{jones98efficient} or machine learning \citep{Rasmussen2006}.

\nicolas{Let $Y$ be a centered Gaussian process on $D \subset \R^d$, with covariance function $k: D \times D \to \R$ (ie $k(x,x')=\Cov{\Y(x),\Y(x')}$), and let $x_1,...,x_n \in D$ be $n$ points in the input space where $Y$ is observed (exactly). The assumptions that $Y$ is zero-mean (simple Kriging, corresponding to a known mean) and that it is observed without observation noise are common in the literature and they will be used in throughout the paper for conciseness. These two assumptions will however be relaxed in Sects. \did{\ref{subsection:measurement:errors} and \ref{subsection:universal:kriging}} to ensure the method under study can be applied in more practical case studies.}

\nicolas{Let} $X$ be the $n \times d$ matrix with row $i$ equal to $\transpose{x_i} $.
For any functions $f: D \rightarrow \R$, $g: D \times D \rightarrow \R$ and for any matrices $A=\transpose{(a_1,\ldots, a_n)}$ and $B=\transpose{(b_1, \ldots, b_m)}$, with $a_i \in D$ for $i=1,...,n$ and $b_i \in D$ for $i=1,...,m$, $f(A)$ \nicolas{denotes} the $n \times 1$ real valued vector with components $f(a_i)$ and \did{ $g(A,B)$ denotes} the $n \times m$ real valued matrix with components $g(a_i, b_j)$, $i=1,\dots, n$, $j=1,\dots, m$. 
With this notation, the conditional distribution of $\Y$ given the $n \times 1$ vector of observations $\Y(X)$ is Gaussian with mean, covariance and variance:
\begin{equation}
\accoladesplit{
	\M_\full(x) &= \Esp{\Y(x)|\Y(X)} = k(x,X) k(X,X)^{-1} \Y(X) \virguleacc \\
	c_\full(x,x') &= \Cov{\Y(x),\Y(x')|\Y(X)} = k(x,x') - k(x,X) k(X,X)^{-1} k(X,x') \virguleacc\\
v_\full(x) &= c_\full(x,x) \pointacc
}
\label{eq:fullmodel}
\end{equation}


\nicolas{Computing the terms on the right hand side of~\eqref{eq:fullmodel} requires} to invert the $n \times n$ covariance matrix $k(X,X)$, which leads to a $O(n^2)$ \did{storage requirement} and $O(n^3)$ \did{computational complexity}. In practice, this \nicolas{posterior} distribution is hence difficult to compute when the number of observation points exceeds a few thousands. \fb{The challenge of a large number of observation points for Kriging is for instance acknowledged in Section 5 of \cite{davis2019development}.}

Many methods have been proposed in the literature to approximate the conditional distribution~\eqref{eq:fullmodel}, \nicolas{without incurring a large} computational cost. These methods include low rank approximation\nicolas{s} \citep[see][and the references therein for a review]{stein14limitations}, sparse \nicolas{methods} \citep{hensman2013}, covariance tapering \citep{furrer2006covariance,kaufman08covariance}, Gaussian Markov Random Fields \citep{rue05gaussian, datta2016hierarchical}, and aggregation-based approximations \citep{deisenroth2015}. \nicolas{This paper focuses on the later approach, which consists of building sub-models based on subsets of the data before aggregating their predictions. More precisely,} these methods first construct $p$ sub-models $M_1,...,M_p: D \to \R$, where $M_i(x)$ is a predictor of $Y(x)$ built from a subset $X_i$ of size $n_i \times d$ of the observation points in $X$. The rationale is that when $n_i$ is small compared to~$n$, $M_i(x)$ can be obtained \did{efficiently} with a small computational cost. The sub-models $M_1,...,M_p$ are then combined to obtain the aggregated predictor $M_{\agg}: D \to \R$. Examples of aggregation techniques for Gaussian processes are (generalized) products of experts and (robust) Bayesian committee machines \citep{hinton2002training,trespBCM,caoGPOE,deisenroth2015,van2015optimally},
as well as the \NK{} predictor (or Nested Pointwise Aggregation of Experts (NPAE)) \citep{rulliere2018nested}.  It must be noted that \NK{} relies on a particular aggregation of several predictors; \did{a review of probability aggregation methods in Geoscience can be found in \citet{allard2012probability}}. 

\nicolas{Aggregation methods can be of particular interest in a geoscience context.} \did{As it is well known, the origins of Kriging are directly linked to mining and geostatistics \citep{cressie1990origins, chiles2018fifty}, and \nicolas{it is common to encounter large datasets in such context}. As an example, imagine a measurement (say, radioactivity) at several locations on the ground. The measures can be done with simple movable devices at many locations, eventually repeated at several times, so that the number of measurements can be important, and each measure may come with a measurement error. There is a necessity to handle a large amount of potentially noisy measures, which is not possible with classical Kriging techniques, but becomes possible with \nicolas{aggregation techniques} such as \NK{}. \nicolas{Despite its novelty,} the \NK{} predictor has already been used in several application fields, including earth and geostatistical sciences: see \citet{sun2019} for the study of air pollution,  \citet{bacchi2020} for tsunami analysis and \citet{krity2020} for contaminant source localization in the ground. This also emphasizes the importance of aggregation methods in a geostatistical context.
}

\did{Benchmarks of different spatial interpolation methods are also of importance, in particular when dealing with big data. Among recent ones, a general benchmark on some other methods applicable with big data can be found in \cite{heaton2019case}.
\nicolas{It is worth noting that \NK{} often appears among the two or three best competitors (typically among around 12 methods) in the numerical studies that include it into the benchmark} \citep{rulliere2018nested, liu2018generalized, he2019query, liu2020, van2020cluster}. \nicolas{These good empirical performances are supported by the theoretical properties of \NK{} which guaranty some} optimal performances under a correct estimation of the underlying hyperparameters and under stationarity.
}

\nicolas{This paper provides additional theoretical insights into aggregation methods, with an emphasis on \NK{} \citep{rulliere2018nested}. A distinction is introduced between aggregation techniques that only rely on the conditional variances $v_1(x),...,v_p(x)$ of the sub-models (such as products of expert and Bayesian committee machines), and the ones, like \NK{}, where the aggregation accounts for the covariance between the sub-models. As shown in Proposition~\ref{prop:non:consistency}, techniques based only on the sub-model variances can lead to inconsistent estimators of $Y(x)$} \fb{in the infill (fixed-domain) asymptotic setting \citep{Cre1993,stein2012interpolation}}.
\nicolas{On the other hand, Proposition~\ref{prop:consistency} guaranties the consistency, again in the infill (fixed-domain) asymptotic setting,} of the \NK{} predictor. In addition, the \NK{} predictor can be interpreted as an exact conditional expectation, for a slightly different Gaussian process prior. 

\nicolas{Furthermore, the paper introduces two extensions of the \NK{} methodology which broaden the use cases where the approach can be applied. The first one is to make \NK{} amenable to observation noise corrupting the measurements (the initial exposition in \citet{rulliere2018nested} focused on the noiseless setting). The second is to generalise the method to universal Kriging, where the Gaussian process prior includes an unknown mean function that must be estimated. Note that both generalisations result in similar storage or computational requirements as the original approach.}

The structure of the article is as follows. Section~\ref{section:non:consistency} introduces covariance-free aggregation techniques and present the non-consistency result. Section~\ref{section:rulliere} summarizes the aggregation method of \citet{rulliere2018nested}, gives its consistency property, shows how it can be interpreted as an exact conditional expectation and provides some error bounds for the \NK{} approximation. It also provides a numerical illustration of the consistency and inconsistency properties shown in this paper. Section~\ref{section:impact:clustering} studies the impact of the assignment of the observation points to the sub-models. \nicolas{Finally, Sect.~\ref{section:extensions} provides} \fb{the extensions to measurement errors and universal Kriging} \nicolas{and concluding remarks are given in Sect.~\ref{section:conclusion}}. \nicolas{For the sake of the clarity of the exposition,} most of the proofs are postponed to the appendix.

\section{Covariance-free aggregation techniques} \label{section:non:consistency}

For $i=1,...,p$, let $X_i$ be a $n_i \times d$ matrix composed of a subset of the lines of $X$\nicolas{, such that} $n_1+ \cdots +n_p = n$ and $X_1,...,X_p$ constitute a partition of $X$. \nicolas{For $i=1,...,p$, let $M_i(x) = k(x,X_i) k(X_i,X_i)^{-1} Y(X_i) $ and $v_i(x) = k(x,x) - k(x,X_i) k(X_i,X_i)^{-1} k(X_i,x)$ be the conditional mean and variance of $Y(x)$ given $Y(X_i)$. This section focuses on aggregated predictors that only depend on (predicted) variances}
\begin{equation} \label{eq:form:aggregation:variance}
M_{\agg}(x) = \sum_{k=1}^{p} \alpha_{k}(\vv_1(x),...,\vv_{p}(x),\vv_{prior}(x)) M_k(x),
\end{equation}
where $\vv_{prior}(x) = k(x,x)$ and with $\alpha_k: [0,\infty)^{p+1}\to \R$. \nicolas{Several aggregation techniques, such as} product of expert (POE), generalized product of expert (GPOE), Bayesian committee machines (BCM) and robust Bayesian committee machines (RBCM), \nicolas{can be written under  the form of~\eqref{eq:form:aggregation:variance}}. For POE \citep{hinton2002training,deisenroth2015} and GPOE \citep{caoGPOE} \nicolas{the weights associated to each sub-model are}
\[
\alpha_k(v_1,...,v_p,v_{prior}) = 
\frac{\beta_k(x) \frac{1}{v_k}
}
{
\sum_{i=1}^{p} \beta_i(x) \frac{1}{v_i} 
}
\]
with $\beta_i(x) = 1$ for POE and $\beta_i(x) = (1/2) [ \log(\vv_{prior}(x)) - \log(\vv_i(x)) ]$ for GPOE. For BCM \citep{trespBCM} and RBCM \citep{deisenroth2015} \nicolas{they are}
\[
\alpha_k(v_1,...,v_p,v_{prior}) = 
\frac{\beta_k(x) \frac{1}{v_k}
}
{
\sum_{i=1}^{p} \beta_i(x) \frac{1}{v_i} 
+\left(1 - \sum_{i=1}^{p} \beta_i(x) \right) \frac{1}{v_{prior}} 
}
\]
with $\beta_i(x) = 1$ for BCM and $\beta_i(x) = (1/2) [ \log(\vv_{prior}(x)) - \log(\vv_i(x)) ]$ for RBCM.

\nicolas{The next proposition shows} that aggregations given by~\eqref{eq:form:aggregation:variance} can lead to mean square prediction errors that do not go to zero as $n \to \infty$, when considering triangular arrays of observation points that become dense in a compact set $D$ \fb{(which is the infill asymptotic setting, \cite{Cre1993,stein2012interpolation})}. This proposition thus provides a counter-example, but does not prove that aggregation procedures given by \eqref{eq:form:aggregation:variance} are inconsistent in general. This inconsistency will however be confirmed in some further simple numerical experiments. \nicolas{The property relies on} Gaussian processes satisfying the no-empty ball (NEB) property, which has been introduced in \citet{vazquez2010convergence}.
\begin{definition} \label{def:NEB}
A Gaussian process $Y$ on $D$ has the NEB property if for any  $x_0 \in D$ and for any sequence $(x_i)_{i \geq 1}$ of points in $D$, the following two assertions are equivalent.
\begin{enumerate}
\item $\Var{ Y(x_0) | Y(x_1),...,Y(x_n) }$ goes to $0$ as $n \to \infty$,
\item $x_0$ is an adherent point of the sequence $(x_i)_{i \geq 1}$. 
\end{enumerate}
\end{definition}

\begin{proposition}[Non-consistency of some covariance-free aggregations] \label{prop:non:consistency}
Let $D$ be a compact subset of $\mathds{R}^d$ with non-empty interior. Let $Y$ be a Gaussian process on $D$ with mean zero and covariance function $k$. Assume that $k$ is defined on $\mathds{R}^d$, continuous and satisfies $k(x,y)>0$ for two distinct points $x,y$ in the interior of $D$. Assume also that $Y$ has the NEB property.
For $n \in \mathbb{N}$ and for any triangular array of observation points $(x_{ni})_{1 \leq i \leq n; n \in \mathbb{N}}$, let $p_n$ be a number of Kriging predictors, $X$ be the $n \times d$ matrix with row $i$ equal to $\transpose{x}_{ni}$, and $X_{1},...,X_{p_n}$ be a partition of $X$. For $n \in \mathbb{N}$ let $M_{\agg,n}$ be defined as in~\eqref{eq:form:aggregation:variance} with $p$ replaced by $p_n$. Finally, assume that
\begin{eqnarray} \label{eq:assumption:aggregation:for:inconsistency}
\alpha_{k}(\vv_1(x),...,\vv_{p_n}(x),\vv_{prior}(x)) \leq \frac{
a( \vv_k(x) , \vv_{prior}(x) )
}{
\sum_{l=1}^{p_n} b( \vv_l(x) , \vv_{prior}(x) )
},
\end{eqnarray}
where $a$ and $b$ are given deterministic continuous functions from $\Delta = \{ (x,y) \in (0,\infty)^2; x \leq y \}$ to $[0 , \infty)$, with $a$ and $b$ positive on $\mathring{\Delta} = \{ (x,y) \in (0,\infty)^2; x < y \}$.

Then, there exists a triangular array of observation points $(x_{ni})_{1 \leq i \leq n; n \in \mathbb{N}}$ such that \linebreak[4] $\lim_{n \to \infty} \sup_{x \in D} \min_{i=1,...,n} || x_{ni} - x || = 0$, a triangular array of submatrices $X_1,...,X_{p_n}$ forming a partition of $X$, with $p_n \to_{n \to \infty} \infty$ and $p_n / n \to_{n \to \infty} 0$, and such that
\begin{equation} \label{eq:int:pred:large} 
\liminf_{n \to \infty}
\int_{D}
\Esp{\left(Y(x) - M_{\agg,n}(x)\right)^2}
dx
> 0.
\end{equation}
As a consequence, there exists a subset $C$ of $D$ with strictly positive Lebesgue measure so that, for all $x_0 \in C$, 
\begin{equation} \label{eq:pred:large}
 \Esp{\left(Y(x_0) - M_{\agg,n}(x_0)\right)^2}
\not \to_{n \to \infty} 0.
\end{equation}
\label{prop:nonconsistency}
\end{proposition}

It is easy to see that the proposition applies to the POE, GPOE, BCM, RBCM methods introduced above. Hence, Proposition~\ref{prop:nonconsistency} constitutes a significant theoretical drawback for an important class of aggregation techniques in the literature, which are based solely on conditional variances.

The detailed proof is given in \nicolas{Appendix}~\ref{app:nonconsistency}. The intuitive explanation is that the aggregation methods for which the proposition applies ignore the correlations between the different Kriging predictors. Hence, for prediction points around which the density of observation points is smaller than on average, too much weight can be given to Kriging predictors based on distant observation points. 
\nicolas{It is worth noting} that, in the proof of Proposition~\ref{prop:nonconsistency}, the density of observation points in the subset of $D$ where the inconsistency occurs is asymptotically negligible compared to the average density of observation points. Hence, this proof does not apply to triangular arrays of observation points for which the density is uniform (for instance grids of points or uniformly distributed random points). Thus, Proposition~\ref{prop:nonconsistency} does not preclude the consistency of the POE, GPOE, BCM, RBCM methods for uniformly dense observation points.
It should be noted that when doing optimization, or when looking for optimal designs for parameter estimation, see Fig. 4 in~\cite{zhu2006spatial}, one may naturally end up with strongly non-uniform densities of observation points, so that unbalanced designs leading to non-consistency are not purely theoretical.

\begin{remark}
The NEB property holds for many Gaussian processes defined on $D \subset \R^d$ with zero mean function and covariance function $k$. In particular, assume that
$k$ has a positive spectral density (defined by $\hat{k}(\omega) = \int_{\mathds{R}^d} k(x) \exp( - J \transpose{x} \omega ) dx$ with $J^2 = -1$ and for $\omega \in \mathds{R}^d$). Assume that there exist $0 \leq A < \infty$ and $0 \leq T < \infty$ such that $1/\hat{k}(\omega) \leq A (1+||\omega||^t)$, with $||.||$ the Euclidean norm. Then $Y$ has the NEB property \citep{vazquez2010convergence,vazquez10pointwise}.
These assumptions are satisfied by many stationary covariance functions, \nicolas{such as Mat\'ern ones}, but a notable exception is the Gaussian covariance function \citep[Proposition~1 in][]{vazquez10pointwise}.
\end{remark}

\begin{remark}
The partitions $X_1,\ldots,X_{p_n}$ for which the inconsistency occurs in Proposition~\ref{prop:non:consistency} can typically be representative of outputs of clustering algorithms, in the sense that points in the same group $X_i$ would be close to each other. \nicolas{This is further discussed in} Remark~\ref{remark:clustering} \nicolas{in Appendix \ref{app:nonconsistency}}. 
\end{remark}

\begin{remark}
Proposition~\ref{prop:nonconsistency} does not imply that all the aggregation methods based only on the conditional variances are inconsistent. In particular, consider the aggregation consisting in predicting from the subset of observations yielding the smallest conditional variance, defined by $M_{\agg}(x) = M_{i(x)}(x)$ where $i(x) = \mathrm{argmin}_{j=1,...,p} v_j(x)$. Then, the aggregated predictor $M_{\agg}(x)$ can be seen to be consistent from the proof of Proposition~\ref{prop:consistency} below.  
\end{remark}

\section{The \NK{} prediction} \label{section:rulliere}

\nicolas{This section assumes that $M_1(x),...,M_p(x)$ have mean zero and finite variance, but not necessarily that they can be written as} $M_i(x) = k(x,X_i) k(X_i,X_i)^{-1} Y(X_i)$. 
Let $M(x) = \transpose{(M_1(x),...,M_p(x))}$ be the vector of sub-models, $K_M(x)$ be
the $p \times p$ covariance matrix of $(M_1(x),...,M_p(x))$, and $k_M(x)$ be 
the $p \times 1$ vector with component $i$ equal to $\Cov{M_i(x), Y(x)}$. \nicolas{The main assumption that will be required hereafter is:}

\begin{assumption}[Assumptions on sub-models] For all $x\in D$, the random variables $Y(x), M_1(x),...,M_p(x)$ have mean zero and finite variance, and the matrix $K_M(x)=\Cov{M(x), M(x)}$ is invertible. Furthermore, \nicolas{the following assumptions may be considered separately}:
\begin{enumerate}[label=(H\arabic*)]
\item\label{H1} $\M$ is linear in $\Y(X)$: for all $x\in D$, there exists a deterministic $p\times n$ matrix $\Lambda(x)$ such that $\M(x) = \Lambda(x) \Y(X)$, \ie each sub-model is a linear combination of observations $Y(X)$.
\item\label{H2} $\M$ interpolates $\Y$ at $X$: for any component $x_k$ of $X$ there is at least one index $i_k \in \set{1, \ldots, p}$ such that $M_{i_k}(x_k)=Y(x_k)$, \ie any observation is interpolated by at least one sub-model.
\item\label{H3} $(\M,\Y)$ is Gaussian: the joint process $(M_1(x),...,M_p(x), Y(x))_{x \in D}$ is multivariate Gaussian.
\end{enumerate}
\end{assumption}
These assumptions \nicolas{are not particularly restrictive and they are satisfied} in the classical situation where the sub-models are given by interpolating Kriging models $M_i(x) = k(x,X_i) k(X_i,X_i)^{-1} Y(X_i)$, $i \in \set{1, \ldots, p}$. \nicolas{Note that the relaxation of (H2) is takled in Section~\ref{section:extensions} and that several results presented in this section can be extended to the case where (H3) is not satisfied} by using matrix pseudo-inverses.

In \citet{rulliere2018nested}, the aggregated predictor $M_{\agg}(x)$ is defined as the best linear predictor of $Y(x)$ from $M_1(x),...,M_p(x)$, which implies
\begin{equation} \label{eq:aggregation:rulliere}
\accolade{
M_{\agg}(x) &=& \transpose{k_M(x)} K_M(x)^{-1}M(x) \, ,\\
\vv_{\agg}(x) &=& \Esp{ (Y(x) - M_{\agg}(x))^2 } =  k(x,x) -  \transpose{k_M(x)}K_M(x)^{-1}k_M(x) \, .
}
\end{equation}
Under assumptions~\ref{H1},~\ref{H2} and~\ref{H3}, the aggregated predictor $M_{\agg}$ preserves the linearity, the interpolation properties, and the conditional Gaussianity. 
\nicolas{Furthermore, using \ref{H1} one easily gets the expressions of} \did{$k_M(x)$ and $K_M(x)$
\begin{equation}
    \accolade{
    k_M(x) &=& \Lambda(x) k(X,x),\\
    K_M(x) &=& \Lambda(x) k(X,X) \transpose{\Lambda(x)}.
    }
\end{equation}
The aggregated predictor is straightforward to compute in this case, which occurs for example when the submodels $M_1(x), ..., M_p(x)$ are simple Kriging predictors.
}

\citet{rulliere2018nested} show that, for $n$ observation points and $q$ prediction points, the complexity of the aggregation procedure $M_{\agg}$ can reach simultaneously $O(n)$ in \did{storage requirement} and $O(n^2q)$ in \did{computational complexity} when $q=o(n)$.
This \did{computational} complexity is larger than \nicolas{the one of covariance-free aggregation procedures but much smaller than the standard Kriging complexity (see Sect.~\ref{subsection:measurement:errors} for more details)}.
This makes possible the use of this aggregation method with a large number of observations (up to one million points in \citet{rulliere2018nested}). \did{The calculation of the \NK{} predictor can also \nicolas{benefit from parallel computing, both for building the sub-models and for predicting at different prediction points. A} public implementation using parallel computation and allowing measurement errors \fb{(see Sect. \ref{subsection:measurement:errors}) and universal Kriging (see Sect. \ref{subsection:universal:kriging})} is available at \url{https://github.com/drulliere/nestedKriging}.}

\nicolas{Although this article focuses on the case where the covariance function $k$ of the Gaussian process $Y$ is known, 
the parameters of the covariance function often need to be estimated in practice \citep{roustant12dice,abrahamsen97review,stein2012interpolation}}. 
\fb{In a big data context where the aggregated predictor of \eqref{eq:aggregation:rulliere} is relevant, classical parameter estimation methods like maximum likelihood \citep{stein2012interpolation} or cross validation \citep{Bachoc13cross,bachoc2017cross,zhang10kriging} are too computationally prohibitive to be carried out directly. \citet{rulliere2018nested} suggest to apply cross validation to the aggregated predictor in \eqref{eq:aggregation:rulliere} rather than to the full Kriging predictor in \eqref{eq:fullmodel}, and to use stochastic gradient for optimization with respect to the covariance parameters. This results in a procedure that is applicable to a large data set. One could also use a smaller subset of a large data set specifically for covariance parameter estimation by classical maximum likelihood or cross validation. Finally, one could also optimize, with respect to the covariance parameters, the sum of the logarithms of the likelihoods (or of cross validation scores) from each of the subsets $X_1,Y(X_1), \ldots , X_n,Y(X_n)$. This enables to exploit the entire data set for covariance parameter estimation, while keeping a manageable computational complexity.}

\nicolas{The rest of the section focuses on the theoretical properties of this particular aggregation method: it contains the consistency results \fb{under infill asymptotics}, reinterprets the \NK{} approximation as the exact conditional expectation for a modified Gaussian process, and provides bounds on the errors $M_{\agg}(x) - M_{\full}(x)$ and $v_\agg(x)-v_\full(x)$.}


\subsection{Consistency}

\nicolas{The next proposition provides} the consistency result in the case where $\Y$ is a Gaussian process on $D$ with mean zero and $M_i(x) = k(x,X_i)k(X_i,X_i)^{-1}Y(X_i)$, which implies~\ref{H1},~\ref{H2},~\ref{H3}. The proof is given in Appendix~\ref{app:consistency}.

\begin{proposition}[Consistency]
Let $D$ be a compact nonempty subset of $\mathds{R}^d$. Let $\Y$ be a Gaussian process on $D$ with mean zero and continuous covariance function $k$. Let $(x_{ni})_{1 \leq i \leq n, n \in \mathds{N}}$ be a triangular array of observation points so that $x_{ni} \in D$ for all $1 \leq i \leq n, n \in \mathds{N}$ and so that for all $x \in D$, $\lim_{n \to \infty} \min_{i=1,...,n} || x_{ni} - x || = 0$.

For $n \in \mathds{N}$, let $X=(x_{n1},...,x_{nn})^t$, let $\M_{1}(x),...,\M_{p_n}(x)$ be any collection of $p_n$ Kriging predictors based on respective design points $X_1,\ \dots, \ X_{p_n}$, where $X_i$ is a subset of $X$, with $M_i(x) = k(x,X_i)k(X_i,X_i)^{-1}Y(X_i)$ for $i=1,...,p_n$. Assume that each row of $X$ is a row of at least one $X_i$, $1 \leq i \leq p_n$. Then\nicolas{, for $M_{\agg}(x)$ defined as} in~\eqref{eq:aggregation:rulliere}:
\begin{equation}
\sup_{x \in D} \mathbb{E} \left[
\left(
Y(x) - M_{\agg}(x)
\right)^2
\right]
\to_{n \to \infty} 0.
\end{equation}
\label{prop:consistency}
\end{proposition}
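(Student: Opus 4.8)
The plan is to exploit the defining optimality of $M_{\agg}(x)$. By \eqref{eq:aggregation:rulliere}, $M_{\agg}(x)$ is the best linear predictor (the $L^2$-orthogonal projection of $Y(x)$) onto the span of $M_1(x),\dots,M_{p_n}(x)$, so its mean square error is no larger than that of any fixed element of this span, in particular no larger than that of any single sub-predictor $M_i(x)$. Hence for every $x \in D$ and every index $i$,
\[
\Esp{(Y(x) - M_{\agg}(x))^2} \le \Esp{(Y(x) - M_i(x))^2} = v_i(x),
\]
and it suffices to exhibit, for each $x$, one sub-model with small error. I would choose the one attached to the nearest design point: let $x_{n,j(x)}$ realize $\min_{i=1,\dots,n} \norm{x_{ni} - x}$. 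Since each line of $X$ belongs to at least one $X_i$, there is an index $i$ with $x_{n,j(x)} \in X_i$. As $M_i(x)$ is the orthogonal projection of $Y(x)$ onto the span of $Y(X_i)$, which contains $Y(x_{n,j(x)})$, its error is dominated by that of the crude predictor $Y(x_{n,j(x)})$ itself:
\[
v_i(x) \le \Esp{(Y(x) - Y(x_{n,j(x)}))^2} = k(x,x) - 2 k(x, x_{n,j(x)}) + k(x_{n,j(x)}, x_{n,j(x)}).
\]
Combining the two displays produces a bound that involves no matrix inverse.

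It then remains to make this bound uniformly small in $x$. Since $k$ is continuous on the compact set $D \times D$ it is uniformly continuous there; writing $\omega$ for its modulus of continuity and using the triangle inequality on the pairs $(x,x),(x,z)$ and $(z,z),(x,z)$, one gets $k(x,x) - 2k(x,z) + k(z,z) \le 2\,\omega(\norm{x-z})$ whenever $\norm{x-z}$ is small. Applying this with $z = x_{n,j(x)}$ yields
\[
\sup_{x \in D} \Esp{(Y(x) - M_{\agg}(x))^2} \le 2\,\omega\!\left( \sup_{x \in D} \min_{i=1,\dots,n} \norm{x_{ni} - x} \right),
\]
so the entire argument reduces to showing $\sup_{x \in D} \min_{i} \norm{x_{ni} - x} \to 0$, \ie that the pointwise density hypothesis upgrades to a uniform one.

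The main obstacle is precisely this last step, since the proposition only assumes $\min_i \norm{x_{ni} - x} \to 0$ for each \emph{fixed} $x$. I would resolve it by observing that each function $\delta_n(x) := \min_{i=1,\dots,n} \norm{x_{ni} - x}$ is $1$-Lipschitz, so the family $(\delta_n)_n$ is equicontinuous on the compact set $D$; a pointwise-convergent equicontinuous sequence on a compact set converges uniformly, whence $\sup_{x \in D} \delta_n(x) \to 0$ and the right-hand side above tends to $0$. I would also emphasize that the whole estimate sidesteps every degeneracy: the two optimality inequalities use only the $L^2$-projection characterization of $M_{\agg}(x)$ and $M_i(x)$ together with the trivial predictor $Y(x_{n,j(x)})$, so no positivity of the variances $v_i(x)$ nor invertibility of $K_M(x)$ is needed for the bound (the case $k(x,x)=0$, where $Y(x)=0$ almost surely, makes the left-hand side zero outright). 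Assembling the three displays then gives the claimed uniform convergence to $0$.
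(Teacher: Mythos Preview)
Your proof is correct and follows the same overall route as the paper: use the two layers of $L^2$-optimality to reduce to a nearest-neighbour error, then upgrade the pointwise fill-distance assumption to a uniform one via compactness. Two execution choices make your version slightly tidier than the paper's. First, you bound $v_i(x)$ by the error of the crude predictor $Y(x_{n,j(x)})$ itself rather than by the conditional mean $\Esp{Y(x)\mid Y(x_{n,j(x)})}$; this gives the symmetric quantity $k(x,x)-2k(x,z)+k(z,z)$ and bypasses the paper's case split on whether $k(t,t)=0$. Second, you obtain $\sup_{x\in D}\delta_n(x)\to 0$ from the $1$-Lipschitz property and equicontinuity, whereas the paper argues by contradiction with a double subsequence extraction. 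Both differences are cosmetic simplifications of the same underlying argument.
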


Proposition~\ref{prop:consistency} shows that, contrary to several aggregation techniques, taking into account the correlations between the predictors enables the aggregation method of \citet{rulliere2018nested} to have a guaranteed consistency.\\ 

\textbf{Numerical \nicolas{illustration of the consistency results}.} \nicolas{Propositions~\ref{prop:nonconsistency}  and~\ref{prop:consistency} are now illustrated on simple examples where the test functions are given by random samples of a centered Gaussian Process $Y$ with Mat\'ern 3/2 covariance \citep[see][]{Rasmussen2006}}. The observation points $x_1, \ \dots \ , \ x_n \in [0,1]$ are ordered and gathered into groups of $\sqrt{n}$ consecutive points to build $\sqrt{n}$ sub-models.
These sub-models are then aggregated following the various methods presented earlier in order to make predictions $M_{\agg}(x_t)$ at $x_t=0.8$. The criterion used to assess the quality of a prediction is the mean square error: $MSE=\Esp{(Y(x_t)-M_{\agg}(x_t))^2}$. Since the prediction methods that are benchmarked all correspond to linear combinations of the observed values, this expectation can be computed analytically and there is no need to generate actual samples from the test functions. 

\nicolas{Two different settings are considered for the input point distribution and the kernel parameters: (A) a uniform distribution and a lengthscale equal to $0.1$ (Fig.~\ref{fig:consistency}.a) and (B), a beta distribution $\beta (10, 10)$ and a lengthscale of $0.2$ (Fig.~\ref{fig:consistency}.b). In both case the variance of $Y$ is set to one. A small nugget effect ($10^{-9}$ for A and $10^{-10}$ for B) is also included in the sub-models to ensure their computations are numerical stable}. Finally, the experiments are repeated 100 times with different input locations $x_1, \dots, x_n$.
\begin{figure}[htp]
  \centering
  \subfloat[Experiment A settings.]{\label{fig:consistency:unif_settings}%
           \includegraphics[width=8cm]{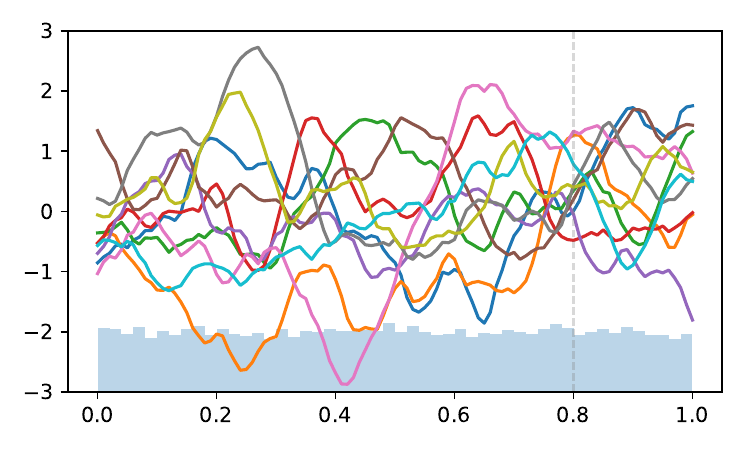}} \qquad
  \subfloat[Experiment B settings.]{\label{fig:consistency:beta_settings}%
           \includegraphics[width=8cm]{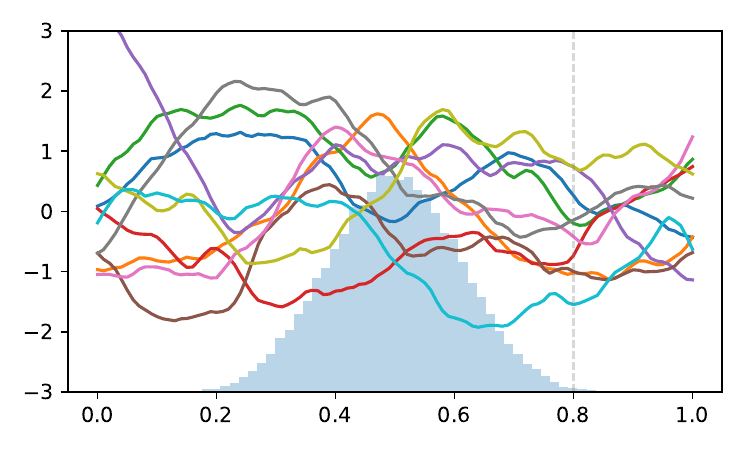}} \\
  \subfloat[Experiment A: MSE as a function of the number of observation points $n$.]{\label{fig:consistency:unif}%
           \includegraphics[width=17cm]{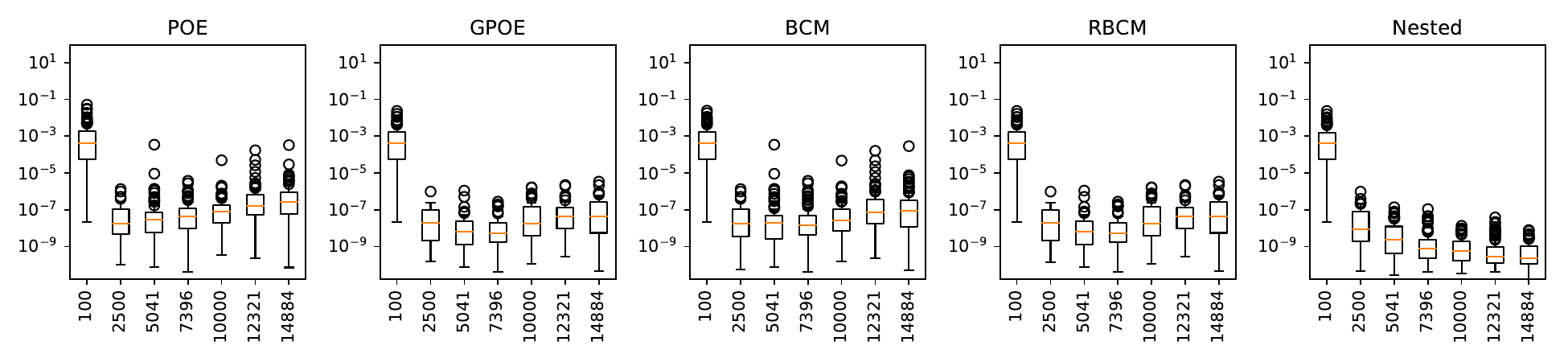}} \\
  \subfloat[Experiment B: MSE as a function of the number of observation points $n$.]{\label{fig:consistency:beta}%
           \includegraphics[width=17cm]{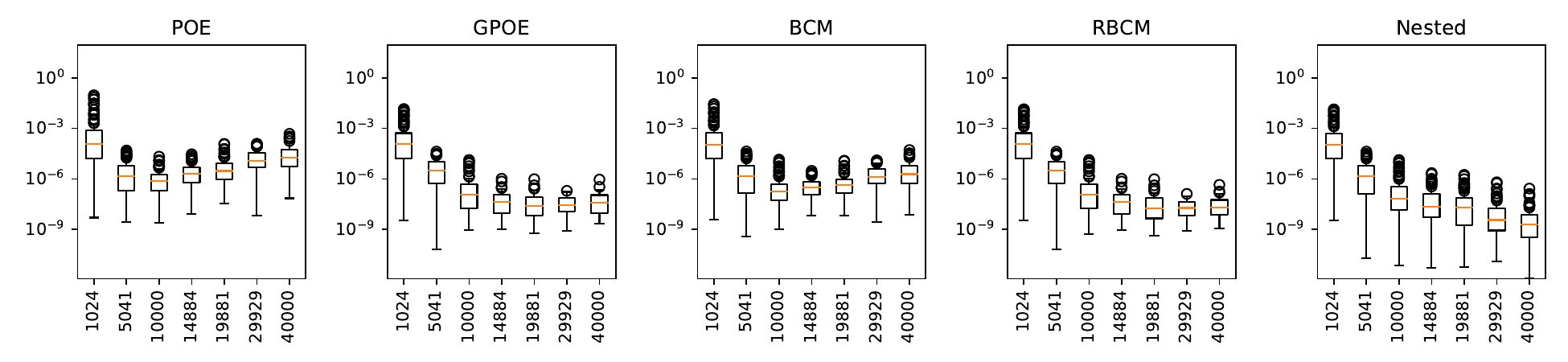}} \\
           \caption{Illustration of the (non)-consistency of the various methods discussed in this paper. (a, b): Details of the experiment settings: Samples of the test functions and distribution of the input points (histogram in the background). The vertical dashed line denotes the test point where MSE is computed. (c, d) Prediction accuracy (MSE) versus the number of observation points.}
	\label{fig:consistency}
\end{figure}

The results of the experiments are shown in panels (c) and (d) of Fig.~\ref{fig:consistency}. First of all, the non-consistency of the methods POE, GPOE, BCM and RBCM is striking: the MSE does not only fail to converge to zero but it actually increases when the number of observation points is greater than $5.\, 10^3$ (Exp. A) or $20.\, 10^4$ (Exp. B). Note that Proposition~\ref{prop:nonconsistency} only shows the existence of a training set where the variance based aggregation methods under study are non consistent: it is thus of significant practical interest to observe this behavior on these simple examples with reasonable settings. On the other hand, the \NK{} aggregation does converge toward zero as guaranteed by Proposition~\ref{prop:consistency}.


\subsection{The Gaussian process perspective}
\label{section:GP:perspective}

\nicolas{This section introduces an alternative construction of the aggregated predictor where the prior} process $\Y$ is replaced by another process $\Y_\agg$ for which $M_\agg(x)$ and $v_\agg(x)$ correspond exactly to the conditional expectation and variance of $\Y_\agg(x)$ given $\Y_\agg(X)$. As discussed in~\citet{quinonero2005},  this \nicolas{construction implies that the} proposed aggregation is not only as an approximation of the full model but also as an exact method for a slightly different prior (as illustrated in the further commented Fig.~\ref{fig:process}). This type of decomposition can naturally occur in the context of predictive processes or low-rank Kriging models, see~\cite{finley2009improving, banerjee2008gaussian,cressie2008fixed}.
As a consequence, it also provides conditional cross-covariances and samples for the aggregated models. In particular, all the methods developed in the literature based on Kriging predicted covariances, such as \citet{marrel2009} for sensitivity analysis and \citet{chevalier2013} for optimization, may hence be applied to the aggregated model in \citet{rulliere2018nested}. 
Recall that $\transpose{(\M_1,\ldots, M_p,\Y)}$ is a centered process with finite variance on the whole input space $D$. \nicolas{The $p \times 1$ cross-covariance vector is defined as } $k_M(x,x')=\Cov{M(x),Y(x')}$ and the $p \times p$ cross-covariance matrix $K_M(x,x')=\Cov{M(x),M(x')}$, for all $x, x' \in D$.
\nicolas{These definitions result in a minor notation overloading with the definitions introduced in Sect.~\ref{section:rulliere} ($K_M(x,x) = K_M(x)$ and $k_M(x,x) = k_M(x)$), but context should be sufficient to avoid confusion}.
\nicolas{The following definition introduces $\Y_\agg$ which is a Gaussian process for which $\M_\agg$ and $v_\agg$ are the conditional mean and variance of $\Y_\agg$ given $\Y_\agg(X)$:}
\begin{definition}[Aggregated process]\label{def:ProcessAgrege}
	$\Y_\agg$ \nicolas{is defined} as $\Y_\agg = \M_\agg + \varepsilon'_\agg $ where $\varepsilon'_\agg$ is an independent replicate of $\Y - \M_\agg$ and with $M_{\agg}$ as in~\eqref{eq:aggregation:rulliere}.
\end{definition}

As $ \Y = \M_\agg + (\Y - \M_\agg)$, the difference between $\Y$ and $\Y_\agg$ is that $\Y_\agg$ neglects the covariances between  $\M_\agg$ and the residual $\Y - \M_\agg$.
\begin{proposition}[Gaussian process perspective]\label{prop:PriorPosterior}
If $\M_\agg$ is a deterministic and interpolating function of $\Y(X)$, \ie if for any $x\in D$ there exists a deterministic function $g_x:\R^n \rightarrow \R$ such that $\M_\agg(x)=g_x(\Y(X))$ and if $\M_\agg(X) = \Y(X)$, or in particular under linearity and interpolation assumptions~\ref{H1} and~\ref{H2} then
	\begin{equation}
\accoladesplit{
		\M_\agg(x) & = \Esp{\Y_\agg(x) | \Y_\agg(X)} \virguleacc\\
		v_\agg(x) & = \Var{\Y_\agg(x) | \Y_\agg(X)} \pointacc
}
	\end{equation}
\end{proposition}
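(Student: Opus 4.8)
The key insight is that $\Y_\agg = \M_\agg + \varepsilon'_\agg$ is built so that $\M_\agg$ and the independent residual $\varepsilon'_\agg$ are uncorrelated by construction. Under the hypotheses, $\M_\agg$ interpolates: $\M_\agg(X) = \Y(X)$, and $\M_\agg(x) = g_x(\Y(X))$ is a deterministic function of the data vector. I need to verify both claims: that the conditional mean of $\Y_\agg(x)$ given $\Y_\agg(X)$ equals $\M_\agg(x)$, and that the conditional variance equals $v_\agg(x)$.

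**The approach and main steps.**

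The plan is to evaluate the process and its residual at the observation points $X$ and exploit the interpolation hypothesis. First I would observe that $\varepsilon'_\agg$ is an independent replicate of $\Y - \M_\agg$; since $\M_\agg(X) = \Y(X)$, the residual $\Y - \M_\agg$ vanishes identically at $X$, hence its replicate satisfies $\varepsilon'_\agg(X) = 0$ almost surely. Therefore $\Y_\agg(X) = \M_\agg(X) + \varepsilon'_\agg(X) = \M_\agg(X) = \Y(X)$, so conditioning on $\Y_\agg(X)$ is the same as conditioning on $\Y(X)$. Because $\M_\agg(x) = g_x(\Y(X))$ is deterministic given the data, I get $\Esp{\Y_\agg(x) \mid \Y_\agg(X)} = \Esp{\M_\agg(x) + \varepsilon'_\agg(x) \mid \Y(X)} = \M_\agg(x) + \Esp{\varepsilon'_\agg(x) \mid \Y(X)}$. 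The second step is to argue the conditional expectation of the residual term drops out: since $\varepsilon'_\agg$ is an \emph{independent} replicate, it is independent of $\Y(X)$, and being centered it contributes zero. This gives the first identity.

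**Handling the variance and the independence subtlety.**

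For the variance, I would write $\Var{\Y_\agg(x) \mid \Y_\agg(X)} = \Var{\M_\agg(x) + \varepsilon'_\agg(x) \mid \Y(X)}$. The term $\M_\agg(x)$ is deterministic given $\Y(X)$, so it contributes nothing to the conditional variance; the term $\varepsilon'_\agg(x)$ is independent of $\Y(X)$, so its conditional variance equals its unconditional variance, namely $\Var{\Y(x) - \M_\agg(x)} = v_\agg(x)$ by the defining property of the best linear predictor. This yields the second identity.

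**Where the work really lies.**

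The main obstacle is not the conditional-expectation bookkeeping but confirming that the hypotheses are actually needed and correctly used—specifically, that the interpolation property $\M_\agg(X) = \Y(X)$ forces the residual to vanish at $X$, which is what collapses $\Y_\agg(X)$ onto $\Y(X)$. I would take care that $\varepsilon'_\agg$ is genuinely independent of the pair $(\M_\agg, \Y)$ and not merely uncorrelated, since this is what lets me split the conditional expectation and variance cleanly in the Gaussian setting; if the processes are jointly Gaussian the independent-replicate construction guarantees this, and the centeredness of $\varepsilon'_\agg$ finishes the mean computation. A secondary point to check is consistency of the claimed covariance function $k_\agg$ in \eqref{eq:kagg} with this construction, so that $v_\agg(x) = k(x,x) - \transpose{k_M(x)}K_M(x)^{-1}k_M(x)$ indeed emerges as the conditional variance; this is a routine expansion of $\Var{\Y - \M_\agg}$ using \eqref{eq:aggregation:rulliere}, but it is worth displaying to make the identification transparent.
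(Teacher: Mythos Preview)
Your proposal is correct and follows essentially the same route as the paper: use interpolation to get $\varepsilon'_\agg(X)=0$ so that $\Y_\agg(X)=\M_\agg(X)=\Y(X)$, then split the conditional mean and variance using the independence of $\varepsilon'_\agg$ and the fact that $\M_\agg(x)=g_x(\Y(X))$ is measurable with respect to the conditioning variable. The paper's proof is exactly this argument, only written with $\M_\agg(X)$ in place of $\Y(X)$ in the conditioning (which is the same $\sigma$-algebra under the interpolation hypothesis).
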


As already stated, given the observations $Y_\agg(X)$, the conditional process $\Y_\agg$ is interesting since its conditional mean and variance, at any point $x$, correspond to the approximated conditional mean and variance of the process $Y$ obtained by the \NK{} technique.
It is thus natural to consider sample paths of this conditional process $Y_\agg$. \nicolas{In the Gaussian setting, studying the unconditional (prior) distribution of the centred process $Y_\agg$ and the conditional (posterior) distribution of $Y_\agg$ given the observations $Y_\agg(X)$ boils down to} studying the prior and posterior covariances of $Y_\agg$. The covariance $k_\agg$ of the process $\Y_\agg$ can be calculated and shown to coincide with the one of the process $\Y$ at several locations, 
in particular, denoting $k_\agg(x,x')  = \Cov{\Y_\agg(x), \Y_\agg(x')}$, one can show that for all $x \in D$, $\Y(x)$ and $\Y_\agg(x)$ have the same variance: $k_\agg(x,x) = k(x,x)$. Furthermore, under the interpolation assumption~\ref{H2}, $k_\agg(X,X) = k(X,X)$. Figure~\ref{fig:kaggk} illustrates the difference between the covariance functions $k$ and $k_\agg$, using the same settings as in Fig.~\ref{fig:process}. \nicolas{Each panel of the figure deserves some specific comments:}
\begin{itemize}
	\item[(a)] the absolute difference between the two covariance functions $k$ and $k_\agg$ is small. Furthermore, the identity $k_\agg(X,X) = k(X,X)$ is illustrated : as $0.3$ is a component of $X$, $k_\agg(0.3,x_k) = k(0.3,x_k)$ for any of the five components $x_k$ of $X$.
	\item[(b)] the contour lines for $k_\agg$ are not straight lines, as it is the case for stationary processes. In this example, $\Y$ is stationary whereas $Y_\agg$ is not. However, the latter only departs slightly from the stationary assumption.
	\item[(c)] the difference $k_\agg - k$ vanishes at some places, among which are the places of the bullets points and the diagonal which correspond respectively to $k_\agg(X,X) = k(X,X)$ and $k_\agg(x,x) = k(x,x)$. Furthermore, the absolute differences between the two covariances functions are again quite small. It also shows that the pattern of the differences is quite complex.
\end{itemize}
\begin{figure}[htp]
  \centering
  \subfloat[Covariance functions $k_\agg(a, \cdot)$ (solid lines) and $k(a, \cdot)$ (dashed lines) with $a = 0.3 \in X$  and $a = 0.85 \notin X$.]{\label{fig:diff}\includegraphics[width=4.5cm]{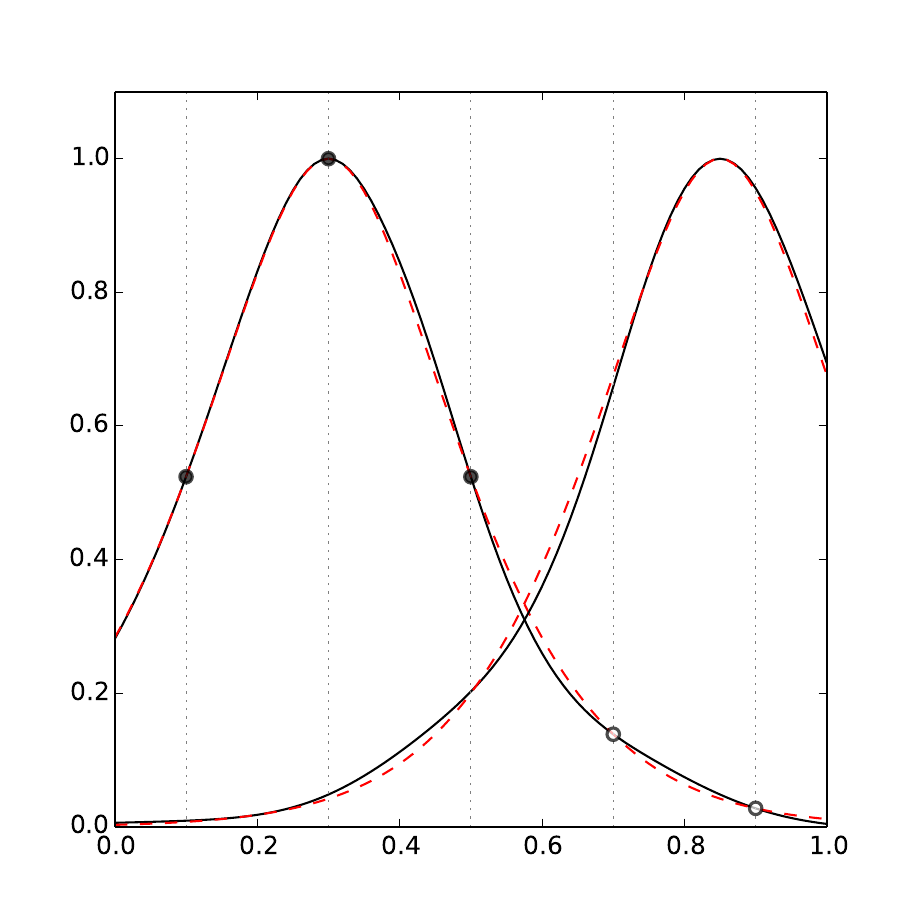}} \qquad
  \subfloat[Contour plot of the modified covariance function $k_{\agg}$.]{\label{fig:modifiedprior}\includegraphics[width=4.5cm]{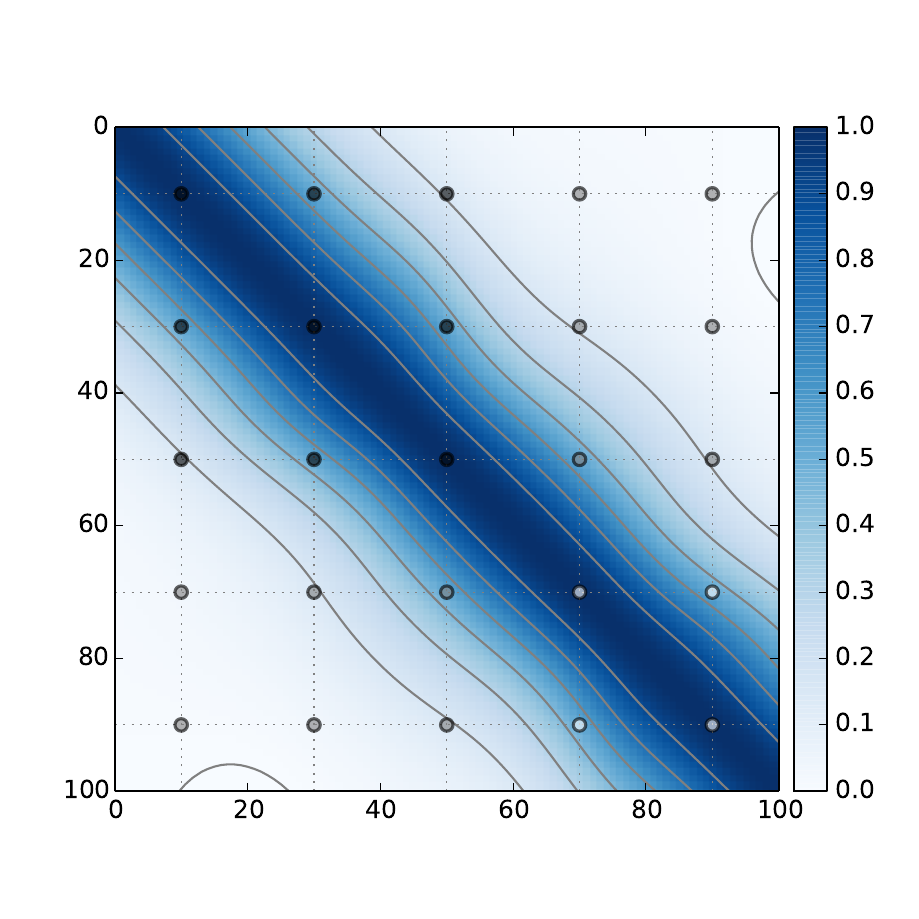}} \qquad
  \subfloat[Image plot of the difference between covariance functions $k_{\agg} - k$.]{\label{fig:diff2}\includegraphics[width=4.5cm]{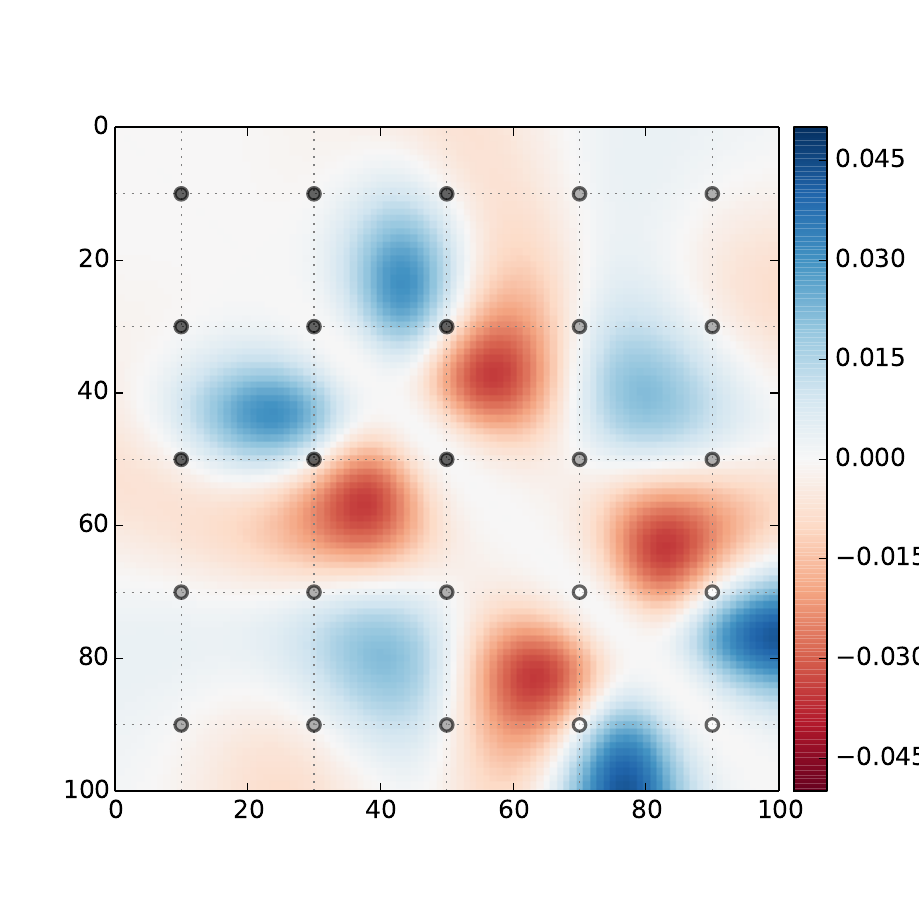}}
  \caption{Comparisons of the modified covariance $k_\agg$ and the initial covariance $k$. The horizontal and vertical dotted lines correspond to the locations of the observation points $x_i$ for $i \in \set{1,\dots,5}$. The bullets indicate locations where $k_\agg(x_i,x_j) = k(x_i,x_j)$.}
  \label{fig:kaggk}
\end{figure}

The previous considerations may help understanding the differences between $Y$ and $Y_\agg$, and thus the approximation that is made by the \NK{} technique. Another interest of $Y_\agg$ is that one can introduce conditional cross-covariances and sample paths. 
The following proposition shows that conditional (posterior) cross-covariances of $Y_\agg$ can be easily computed. In particular, it enables the computation of conditional sample paths of $Y_\agg$.
The proposition also gives some simplifications that \nicolas{make computations tractable} even in the case where the number of observations is large.

\begin{proposition}[Posterior covariances of $Y_\agg$]\label{prop:condCovYagg}
Define the conditional (posterior) cross-covariances of the process $Y_\agg$ given $Y_\agg(X)$ as
\begin{equation}
c_\agg(x,x')=\Cov{Y_\agg(x), Y_\agg(x') \sachant Y_\agg(X)} \, ,
\end{equation}
with $x, x' \in D$. Assume that  $(M, \Y )$ is Gaussian, then $\Y_\agg$ is also Gaussian and the following results hold:
\begin{enumerate}[label=(\roman*)]
\item \label{item:postCov1} The posterior covariance function $c_\agg$ writes, for all $x, x' \in D$,
\begin{equation}\label{Eq:CrossCov:Expr1}
	c_\agg(x,x')  =  k_\agg(x,x') -  k_\agg(x,X) k_\agg(X,X)^{-1} k_\agg(X,x').
\end{equation}
\item \label{item:postCov2}
Denote ${\alpha_\agg(x)}=K_M(x)^{-1}k_M(x)$. Under linear and interpolation assumptions~\ref{H1} and~\ref{H2},
\begin{eqnarray}
c_\agg(x,x') 
&=& k(x,x') - \transpose{\alpha_\agg(x)} k_M(x,x') -  k_M(x',x)\alpha_\agg(x') + \transpose{\alpha_\agg(x)} K_M(x,x') \alpha_\agg(x') \, , \label{Eq:CrossCov:Expr2}
\end{eqnarray}
\item \label{item:postCov3} Under linear and interpolation assumptions~\ref{H1} and~\ref{H2},
\begin{eqnarray}
c_\agg(x,x') 
&=&  \Esp{\left(Y(x)-M_\agg(x)\right)\left(Y(x')-M_\agg(x')\right)}\, .\label{Eq:CrossCov:Expr3}
\end{eqnarray}
In other words, conditional covariances can be understood as prior covariances between residuals.
\end{enumerate}
\end{proposition}

It should be noted that computing $c_\agg$ or generating conditional samples of $Y_\agg$ by using Eq.~\eqref{Eq:CrossCov:Expr1} requires to inverse the $n \times n$ matrix $k_{\agg}(X,X)$ which is computationally costly for large $n$. 
On the contrary, computing $c_\agg$ by using Eq.~\eqref{Eq:CrossCov:Expr2} does only require the computation of covariances between predictors and is tractable even with large datasets. Consider the prediction problem with $n$ observation points and $q$ prediction points where both $n$ and $q$ can be large, with $q=o(n)$.
Consider a reasonable dimension $d \le O(q)$ and a typical number  of sub-models $p=\sqrt{n}$. The complexity for obtaining the \NK{} mean and variance $\set{M_\agg(x), v_\agg(x)}$ for all prediction points is $\mathcal{C}=O(qn^2)$ in computational complexity and $\mathcal{S}=O(nq)$ in storage requirement for the fastest implementations~\citep[see][]{rulliere2018nested}. This storage requirement can be reduced to $\mathcal{S}=O(n)$ when recalculating some quantities. When computing $\set{M_\agg(x), v_\agg(x)}$ together with output covariances $\set{c_\agg(x,x')}$ for all prediction points, using Eq.~\eqref{Eq:CrossCov:Expr2}, one can show that the reachable computational complexity is unchanged and is $\mathcal{C}=O(qn^2)$ when $pq \le n$, or becomes $\mathcal{C}= O(q^2pn)$ otherwise. The associated storage requirement becomes $\mathcal{S}=O(nq^2)$. At last, in the more general case where $O(n^{1/2}) \le p \le O(n^{2/3})$ and $q \le O(n^{2/3})$, one can show that computational complexity is $\mathcal{C}=O(qn^2)$ without computing the covariances $c_\agg(x,x')$ or $\mathcal{C}=O(q^2p^2)$ when computing these covariances.

This Gaussian Process perspective and Proposition~\ref{prop:condCovYagg} \nicolas{can be combined to define unconditional and conditional sample paths. This is illustrated in Fig.~\ref{fig:process} which displays prior and posterior samples of a process $Y_{\agg}$ based on a process $\Y$ with squared exponential covariance $k(x,x') = \exp \left( -12.5 (x-x')^2 \right)$. In this example, the test function is $f(x)=\sin(2 \pi x) +x$, and the input $X = \transpose{(0.1, 0.3, 0.5, 0.7, 0.9)}$ is divided into $p=2$ subgroups $X_1 = \transpose{(0.1, 0.3, 0.5)}$ and $X_2 = \transpose{(0.7, 0.9)}$.}

\begin{figure}[htp]
  \centering
  \subfloat[Unconditional samples.]{\label{fig:priorsample}\includegraphics[width=7cm]{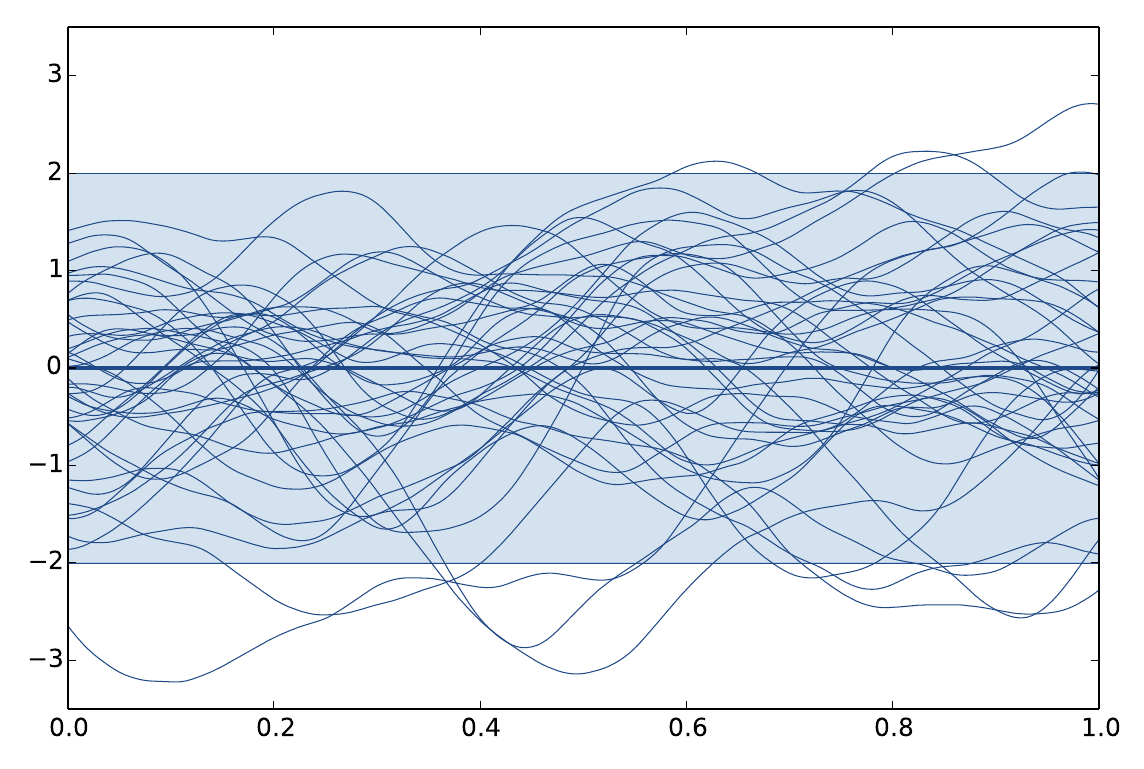}} \qquad
  \subfloat[Conditional samples.]{\label{fig:postsamples}\includegraphics[width=7cm]{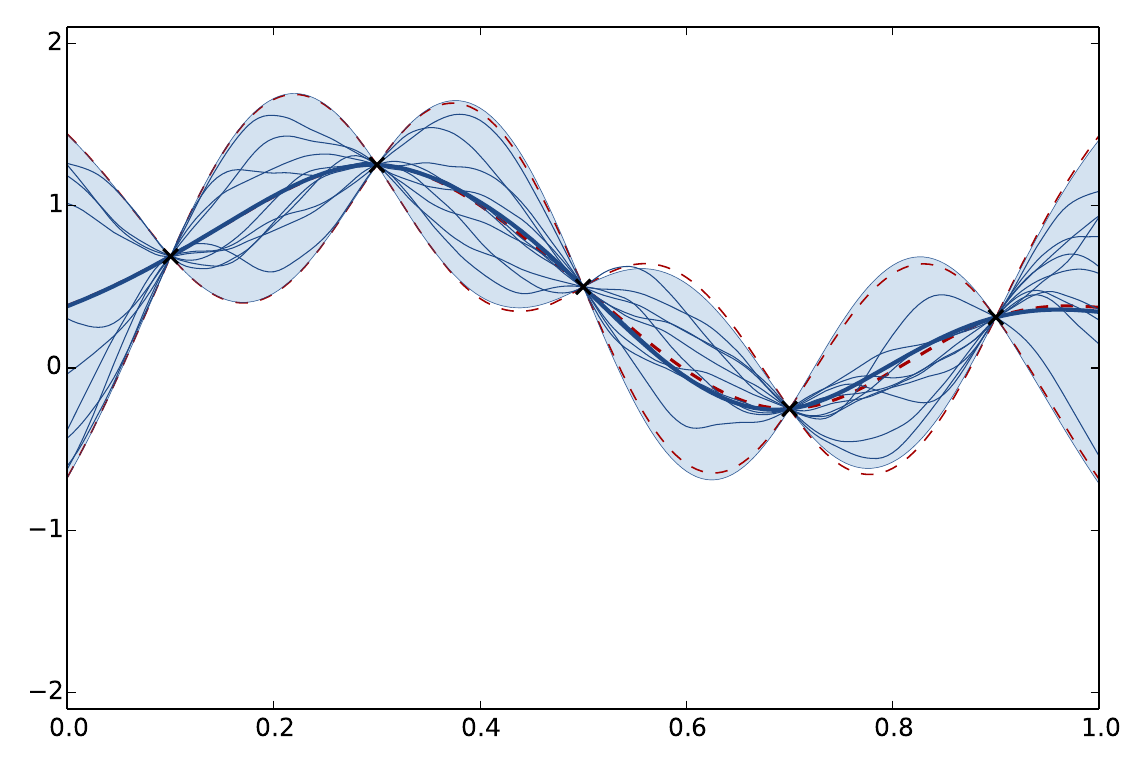}}
  \caption{Illustration of the modified process $Y_{\agg}$. (a) Unconditional sample paths from the modified Gaussian process $\Y_\agg$, with mean $0$ and covariance $k_\agg$. (b) Conditional sample paths of $\Y_\agg$ given $\Y_\agg(X) = f(X)$, with mean $M_\agg$ and covariance $c_\agg$. The thick lines and the blue areas correspond to the means and $95 \%$ confidence intervals for $\Y_\agg$. The dashed red lines are the mean and $95 \%$ confidence intervals for the full model with the original prior $Y$.}
	\label{fig:process}
\end{figure}

\nicolas{Proposition~\ref{prop:condCovYagg} can be used to draw similarities between \NK{} and low-rank Kriging \citep[see][and references therein]{stein14limitations}. In both cases, the predictions can be seen as a tractable approximation of an initial model, but they also correspond to an exact posterior for their stated covariance models (which is not stationary, in general, see Figure~\ref{fig:kaggk}). The main difference between the two methods is that contrarily to low-rank Kriging, \NK{} remains a non-parametric approach. This however comes with an additional computational cost which comes from matrix inverse that need to be computed at prediction time.}

\nicolas{Knowing that the predictor $M_{\agg}$ is a conditional expectation for the process $Y_{\agg}$ can be used} to analyze its error for predicting $Y(x)$, by studying the differences between the distributions of $Y$ and $Y_{\agg}$, in the same vein as in \citet{stein2012interpolation} or \citet{putter2001effect}.
\nicolas{The next section provides more details on the prediction errors made by choosing $\Y_\agg$ in place of $Y$ as a prior}.

\subsection{Bounds on aggregation errors}\label{sec:bounds}

This section aims at studying the differences between the aggregated model $M_{\agg},\vv_{\agg}$ and the full one $M_{\full},\vv_{\full}$. \nicolas{This section focuses} on the case where $M(x)$ is linear in $Y(X)$, \ie there exists a $p \times n$ deterministic matrix $\Lambda(x)$ such that $\M(x) = \Lambda(x) \Y(X)$. \nicolas{This results in}
	\begin{equation}
\accolade{
		\M_\agg(x)-M_\full(x) &=& -k(x,X) \Delta(x) Y(X) \virguleacc\\
	    v_\agg(x)-v_\full(x) &=& k(x,X) \Delta(x) k(X,x) \virguleacc
		}
		\label{eq:diffaggfull}
	\end{equation}
where $\Delta(x)=k(X,X)^{-1}-\transpose{\Lambda(x)}  \big(\Lambda(x) k(X,X) \transpose{\Lambda(x)} \big)^{-1} \Lambda(x)$, as soon as $\Lambda(x) k(X,X) \transpose{\Lambda(x)}$ is invertible.

\nicolas{As illustrated in Fig.~\ref{fig:kaggk}, the covariance functions $k$ and $k_\agg$ are very similar. The following proposition shows that the difference between these covariances can be linked to the aggregation error and can provide a bounds for the absolute errors}.

\begin{proposition}[Errors using covariance differences]
\label{prop:bounds}\label{prop:maxerror}
Under the linear and interpolation assumptions~\ref{H1} and~\ref{H2}, the differences between the full and aggregated models write as differences between covariance functions:
\begin{equation}
\accoladesplit{
	\Esp{(\M_\agg(x) - \M_\full(x))^2} &= \norm{k(X,x)-k_\agg(X,x)}_K^2 \virguleacc\\
	v_\agg(x) - v_\full(x) &= \norm{k(X,x)}_K^2 - \norm{k_\agg(X,x)}_K^2 \pointacc
}
\label{eq:diff}
\end{equation}
The absolute differences can be bounded:
	\begin{equation}\label{eq:boundsnorms}
		\accolade{
			\abs{\M_\agg(x)-M_\full(x)} & \le & \norm{k(X,x)-k_\agg(X,x)}_K \norm{Y(X)}_K \virguleacc\\
		\abs{v_\agg(x)-v_\full(x)} & \le & \norm{k(X,x)}_K^2 \virguleacc
		}
	\end{equation}
where $\norm{u}_K^2 = \transpose{u} k(X,X)^{-1} u$. Assuming that the smallest eigenvalue $\lambda_{\min}$ of $k(X,X)$ is non zero, this norm can be bounded by $\norm{u}_K^2 \le \frac{1}{\lambda_{\min}} \norm{u}^2$ where $\norm{u}$ denotes the Euclidean norm. Furthermore, since $v_\full(x) = \Esp{(\Y(x) - \M_\full(x))^2}$, then
	\begin{equation}
		0 \le v_\agg(x) - v_\full(x) \le \min\limits_{k \in \set{1,\ldots, p}} \Esp{(Y(x)-\M_k(x))^2} - v_\full(x) \, .
		\label{eq:va-vfullbounds}
	\end{equation}
\end{proposition}
Note that previous result is provided for a given number $n$ of observations, for a finite a dimensional $n\times n$ matrix $X$. The asymptotic of the bounds as $n$ grows to infinity depends on the design sequence and the nature of the asymptotic setting (e.g., expansion domain or fixed domain). It would require further developments that are not considered here. 

Proposition \ref{prop:bounds} implies that the \NK{} aggregation has two desirable properties that are detailed in Remarks~\ref{rem:far} and~\ref{rem:pointEquivalence} (with proofs in Appendix). 

\begin{remark}[Far prediction points]\label{rem:far}
For a given number of observations $n$ and a given design $X$, if one can choose a prediction point $x$ far enough from the observation points in $X$, in the sense $\norm{k(X,x)}\le \epsilon$ for any given $\epsilon>0$, then $|\M_\agg(x)-M_\full(x)|$ and $|v_\agg(x)-v_\full(x)|$ can be as small as desired.
\end{remark}

One consequence of the previous remark is that when the covariances between the prediction point $x$ and the observed ones $X$ become small, both models tend to predict the unconditional distribution of $\Y(x)$. This is a natural property that is desirable for any aggregation method but it is not always fulfilled. For example, aggregating two sub-models with POE leads to overconfident models with wrong variance as discussed in \citet{deisenroth2015}.

The difference between the full model and the aggregated one of Fig.~\ref{fig:process} is illustrated in Fig.~\ref{fig:ecart}. Various remarks can be made on this figure. First, the difference between the aggregated and full model is small, both on the predicted mean and variance. Second, the error tends toward 0 when the prediction point $x$ is far away from the observations $X$. This illustrates Proposition~\ref{prop:maxerror} in the case where $\norm{k(X,x)}$ is small. Third, it can be seen that the bounds on the left panel are relatively tight on this example, and that both the errors and their bounds vanish at observation points. At last, the right panel shows $v_\agg(x) \ge v_\full(x)$. This is because the estimator $\M_\agg$ is expressed as successive optimal linear combinations of $\Y(X)$, which have a quadratic error necessarily greater or equal than $\M_{full}$ which is the optimal linear combination of $\Y(X)$. Panel (b) also illustrates that the bounds given in~\eqref{eq:va-vfullbounds} are relatively loose. This means that the nested aggregation is more informative than the most accurate sub-model.
\begin{figure}
  \centering
  \subfloat[differences between predicted means $m_\agg(x) - m_\full(x)$.]{\includegraphics[width=7cm]{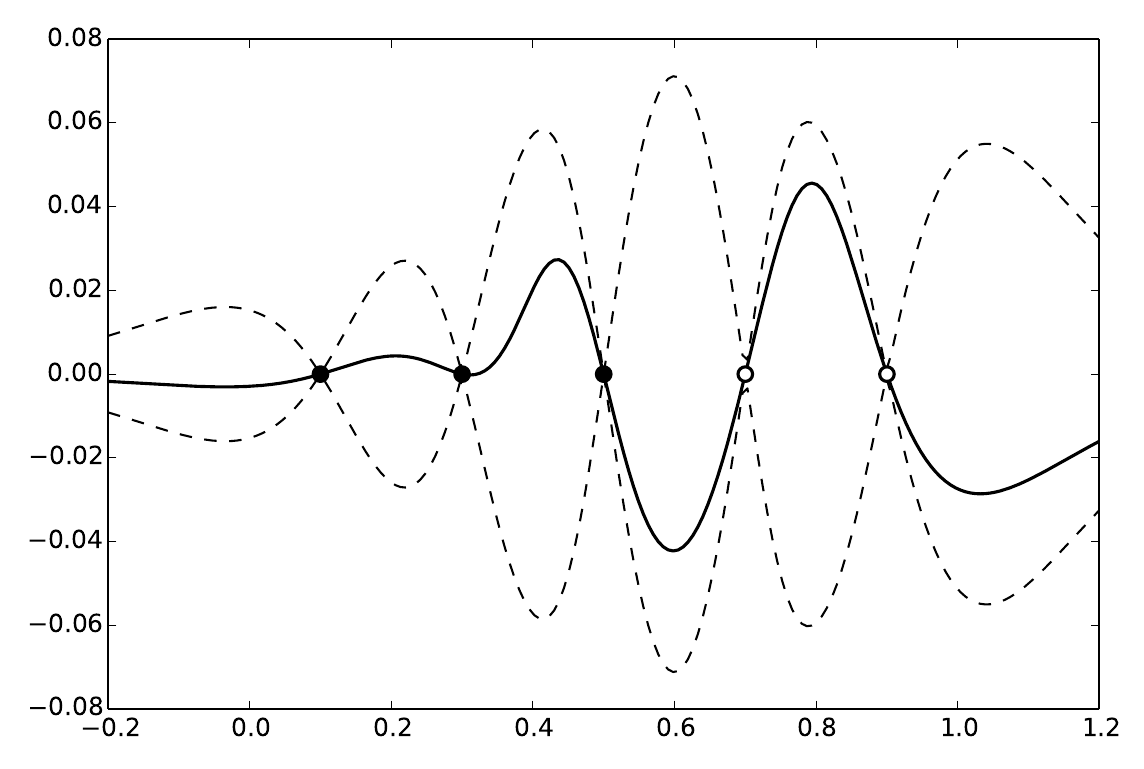}} \qquad
  \subfloat[differences between predicted variances $v_\agg(x) - v_\full(x)$.]{\includegraphics[width=7cm]{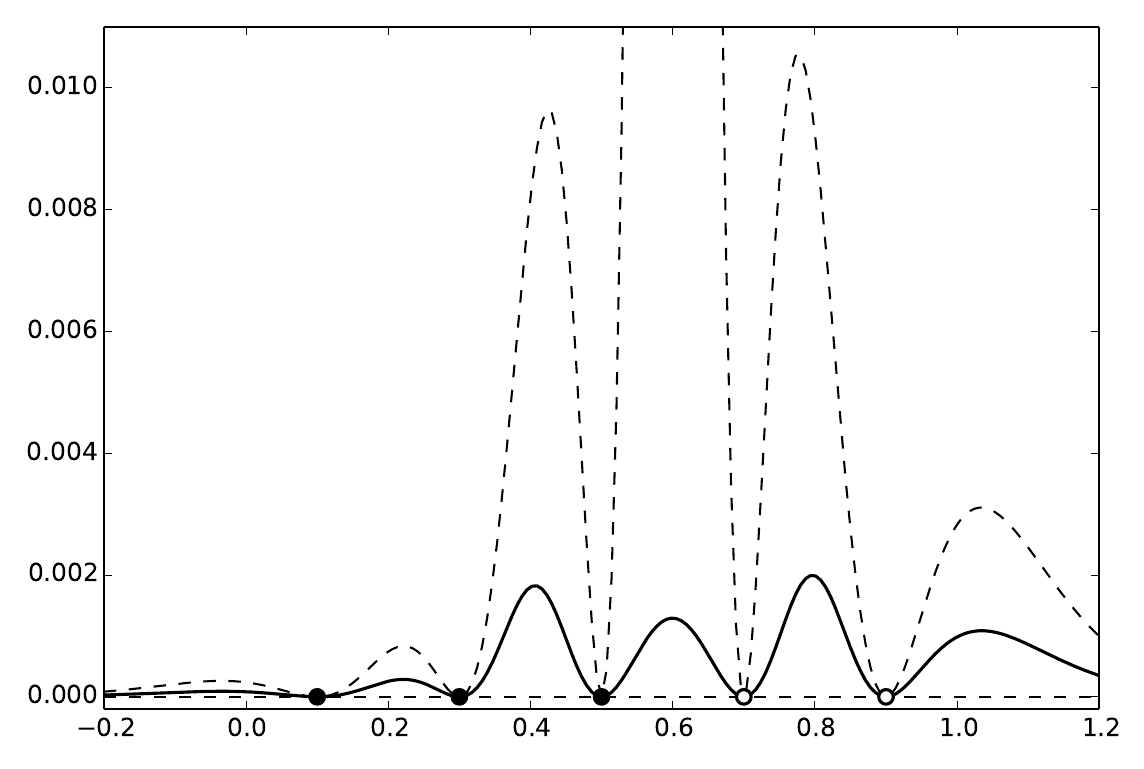}} \qquad
  \caption{Comparisons of the full and aggregated model. The dashed lines correspond to the bounds given in Proposition~\ref{prop:bounds}: $\pm \lambda_{\min}^{-1/2} \norm{k(X,x)-k_\agg(X,x)}$ on panel (a) and bounds of~\eqref{eq:va-vfullbounds} on panel (b).}
\label{fig:ecart}
\end{figure}

At last, the following remark gives another very natural optimality property that is however not satisfied by other aggregation methods such as POE, GPOE, BCM and RBCM (see Sect.~\ref{section:non:consistency}): if the sub-models contain enough information, the aggregated model corresponds to the full one. 
\begin{remark}[Fully informative sub-models]\label{rem:pointEquivalence} Assume~\ref{H1} that $\M(x)$ is linear in $\Y(X)$: $\M(x) = \Lambda(x) \Y(X)$ and that $\Lambda(x)$ is a $n\times n$ matrix with full rank, then
	\begin{equation}
	\accolade{
		\M_\agg(x) &=& M_\full(x) \virguleacc\\
		v_\agg(x) &=& v_\full(x) \pointacc
		}
		\label{eq:fullyinformative}
	\end{equation}

Furthermore, if (H3) also holds,
	\begin{equation}
		\Y_\agg \stackrel{law}{=} \Y \quad \text{ and thus } \quad \Y_\agg |  \Y_\agg(X) \stackrel{law}{=} \Y | \Y(X).
	\end{equation}
	In other words, there is no difference between the full and the approximated models when $\Lambda(x)$ is invertible. 
\end{remark}
Note that there is of course no computational interest in building and merging fully informative sub-models since it requires computing and inverting a matrix that has the same size as $k(X,X)$ so there is no complexity gain compared to the full model.

\section{Analysis of the impact of the group choice} \label{section:impact:clustering}

\nicolas{This section studies} the impact of the choice of the partition $X_1,\ldots,X_p$ of a set of $n$ two-by-two distinct observation points $\{x_1,\ldots,x_n\}$, on the quality of the predictor obtained by aggregating $p$ Gaussian process models based on $X_1,\ldots,X_p$. 

\subsection{Theoretical results in dimension 1}

\nicolas{This section focuses on the univariate case $d=1$ where  with the input locations $x_1,\ldots,x_n \in \R$ are fixed and distinct points and where $Y$ is} a centered Gaussian process with exponential covariance function $k$ defined by
\begin{equation} \label{eq:kexp}
k(t_1,t_2) = \sigma^2 \exp( - |t_1 - t_2| / \theta) , \quad t_1,t_2 \in \R
\end{equation}%
for fixed $(\sigma^2,\theta) \in (0,\infty)^2$. \nicolas{This choice of covariance function makes $Y$ a Markovian GP \citep{Ying91}, which will prove useful to derive theoretical properties on the influence of clustering. More precisely, the idea is to assess} whether selecting the groups $X_1,\ldots,X_p$ based on distances (i.e., placing observation points close to each other in the same group) is beneficial \nicolas{for the approximation accuracy or not. In dimension $1$, the concept of perfect clustering can be defined} as follows.

\begin{definition}[Perfect clustering] \label{def:perfect:clustering}
A partition $X_1,\ldots,X_p$ of $\{x_1,\ldots,x_n\}$, composed of non-empty groups, is a perfect clustering if there does not exist any triplet $u,v,w$, with $u,v \in X_i$ and $w \in X_j$ with $i,j \in \{1 \ldots,p\}$, $i \neq j$, and so that $ u < w <v $.
\end{definition}

The above definition means that the groups $X_1,\ldots,X_p$ are constituted of consecutive points. A partition $X_1,\ldots,X_p$ is a perfect clustering if and only if it can be reordered as $X_{i_1},\ldots,X_{i_p}$ with $\{i_1,\ldots,i_p\} = \{ 1 ,\ldots ,p\}$ and so that for any $u_{1} \in X_{i_1},\ldots,u_p \in X_{i_p}$, the $u_i$ are ordered $u_1 < \ldots < u_p$.

\nicolas{The next proposition shows} that the \NK{} predictor coincides with the predictor based on the full Gaussian process model, if and only if $X_1,\ldots,X_p$ is a perfect clustering. \nicolas{It thus provides a theoretical confirmation that placing observations points close to each other in the same group is beneficial to} the \NK{} procedure.

\begin{proposition}[\MAJNK{} and perfect clustering] \label{prop:perfect:clustering:markov}
Consider an exponential covariance function $k$ in dimension $d=1$, as in~\eqref{eq:kexp}. Let $\M_\full(x)$ be the full predictor as in~\eqref{eq:fullmodel} and let $M_{\agg}(x) $ be the \NK{} predictor as in~\eqref{eq:aggregation:rulliere}, where $M_1,\ldots,M_p$ are the Gaussian process predictors based on the individual groups $X_1,\ldots,X_p$, as in Sect.~\ref{section:non:consistency}, that is assumed to be non-empty. 
Then, $\mathbb{E}[ ( \M_\full(x) - \M_{\agg}(x) )^2] = 0$ for all $x \in \R$ if and only if $X_1,\ldots,X_p$ is a perfect clustering.
\end{proposition}

\begin{proof}
Let $(x,v_1,\ldots,v_r)$ be $r+1$ two-by-two distinct real numbers. If $x < \min(v_1,\ldots,v_r)$, then the conditional expectation of $Y(x)$ given $Y(v_1),\ldots,Y(v_r)$ is equal to $\exp( - | min(v_1,\ldots,v_r) - x | / \theta  ) Y( min(v_1,\ldots,v_r) )$ \citep{Ying91}. Similarly, if $x > \max(v_1,\ldots,v_r)$, then the conditional expectation of $Y(x)$ given $Y(v_1),\ldots,Y(v_r)$ is equal to $\exp( - | max(v_1,\ldots,v_r) - x | / \theta  ) Y( max(v_1,\ldots,v_r) )$. If $\min(v_1,\ldots,v_r) <  x < \max(v_1,\ldots,v_r)$, then the conditional expectation of $Y(x)$ given $Y(v_1),\ldots,Y(v_r)$ is equal to $a Y( x_{<} ) + bY( x_{>} )$ where $x_{<}$ and $x_{>}$ are the left-most and right-most neighbors of $x$ in $\{ v_1 , \ldots , v_r \}$ and where $a,b$ are non-zero real numbers \citep{bachoc2017cross}. Finally, because the covariance matrix of $Y(v_1),\ldots,Y(v_r)$ is invertible, two linear combinations $\sum_{i=1}^r a_i Y(v_i)$ and $\sum_{i=1}^r b_i Y(v_i)$ are equal almost surely if and only if $(a_1,\ldots,a_r) = (b_1,\ldots,b_r)$.

Assume that $X_1,\ldots,X_p$ is a perfect clustering and let $x \in \R$. \nicolas{It is known from \citet{rulliere2018nested} that} $M_\agg(x) = M_\full(x)$ almost surely if $x \in \{x_1,\ldots,x_n\}$. Consider now that  $x \not \in \{x_1,\ldots,x_n\}$.

If $x < \min(x_1,\ldots,x_n)$, then for $i=1,\ldots,p$, $M_i(x) = \exp( - | x_{j_i} - x | / \theta  ) Y(x_{j_i})$ with $x_{j_i} = \min \{ x ; x \in X_i \}$. Let $i^* \in \{1,\ldots,p\}$ be so that $\min(x_1,\ldots,x_n) \in X_{i^*}$. Then $\M_\full(x) = \exp( - | x_{j_{i^*}} - x | / \theta  ) Y(x_{j_{i^*}})$. As a consequence, the linear combination $\lambda_x^t M(x) $ minimizing $\mathbb{E}[ (\lambda^t M(x) - Y(x) )^2]$ over $\lambda \in \R^p$ is given by $\lambda_x = e_{i^*}$ with $e_r$ the $r$-th base column vector of $\R^p$. This implies that $\M_\full(x) = \M_{\agg}(x)$ almost surely. Similarly, if $x > \max(x_1,\ldots,x_n)$, then $\M_\full(x) = \M_{\agg}(x)$ almost surely.

Consider now that there exists $u \in X_{i}$ and $v \in X_{j}$ so that $ u < x < v $ and $(u,v)$ does not intersect with $\{x_1,\ldots,x_n\}$.
 If $i = j$, then $M_i(x) = M_{\full}(x)$ almost surely because the left-most and right-most neighbors of $x$ are both in $X_i$.
Hence, also $M_{\agg}(x) =M_\full(x)$ almost surely in this case. 
  If $i \neq j$, then $u = \max \{t ; t \in X_i\}$ and $v = \min \{t ; t \in X_j\}$ because $X_1,\ldots,X_p$ is a perfect clustering.
Hence, $M_i(x) = \exp( - |x - u|) Y(u)$, $M_j(x) = \exp( - |x - v|) Y(v)$ and $M_{\full}(x) = a Y(u) + bY(v)$ with $a,b \in \R$. Hence, there exists a linear combination $\lambda_i M_i(x) + \lambda_j M_j(x)$ that equals $M_{\full}(x)$ almost surely. As a consequence, the linear combination $\lambda_x^t M(x) $ minimizing $\mathbb{E}[ (\lambda^t M(x) - Y(x) ]^2)$ over $\lambda \in \R^p$ is given by $\lambda_x = \lambda_i e_{i} + \lambda_j e_j$.
Hence $\M_\full(x) = \M_{\agg}(x)$ almost surely. All the possible sub-cases have now been treated, which proves the first implication of the proposition.

Assume now that $X_1,\ldots,X_p$ is not a perfect clustering. Then there exists a triplet $u,v,w$, with $u,v \in X_i$ and $w \in X_j$ with $i,j=1 \ldots,p$, $i \neq j$, and so that $ u < w <v $. Without loss of generality \nicolas{it can further be assumed that} there does not exits $z \in X_i$ satisfying $u < z <v$. 

Let $x$ satisfies $ u < x < w $ and so that $(u,x)$ does not intersect $\{ x_1,\ldots,x_n \}$. Then $\M_{\full} (x) = a Y(u) + b Y(z)$ with $a , b \in \R \backslash \{0\}$ and $z \in \{ x_1,\ldots,x_n \}$, $z \neq v$. Also, $M_i(x) = \alpha Y(u) + \beta Y(v)$ with $\alpha , \beta \in \R \backslash \{0\}$. As a consequence, there can not exist a linear combination $\lambda^t M(x)$ with $\lambda \in \R^p$ so that $\lambda^t M(x) = a Y(u) + b Y(w)$. Indeed a linear combination $\lambda^t M(x)$ is a linear combination of $Y(x_1),\ldots,Y(x_n)$ where the coefficients for $Y(u)$ and $Y(v)$ are $\lambda_i \alpha $ and $\lambda_i \beta$, which are either simultaneously zero or simultaneously non-zero.
  Hence, $M_\agg(x)$ is not equal to $M_\full(x)$ almost surely. This concludes the proof. 
  \qedperso
\end{proof}

\nicolas{The next proposition shows} that the aggregation techniques that ignore the covariances between sub-models can never recover the full Gaussian process predictor, even in the case of a perfect clustering. This again highlights the additional quality guarantees brought by the \NK{} procedure.

\begin{proposition}[Non-perfect other aggregation methods] \label{prop:non:perfect:aggregation:markov}
Consider an exponential covariance function $k$ in dimension $d=1$, as in~\eqref{eq:kexp}. Let $p \geq 3$ and let $X_1,\ldots,X_p$ be non-empty.
Let $M_{\agg}$ be a covariance-free aggregation method  defined as in~\eqref{eq:form:aggregation:variance}, with $\alpha_k(v_1,\ldots,v_p,v) \in \R \setminus \set{0}$ for $v_1,\ldots,v_p,v \in (0,\infty)$ and $v_1 <v,\ldots,v_p < v$. Then, for all $x \in \R \setminus X$,  $\mathbb{E}[ ( \M_\full(x) - \M_{\agg}(x) )^2] > 0$.
\end{proposition}
\begin{proof}
Let $x \in \R\setminus X$. For $i=1, \ldots, p$, $0 < \vv_i(x) < \vv_{prior}(x)$, so that $\alpha_i(v_1(x),\ldots,v_p(x),v_{prior}(x)) \in \R \setminus \set{0}$.
Hence, the linear combination $M_\agg(x) = \sum_{k=1}^{p} \alpha_{k}(\vv_1(x),...,\vv_{p}(x),\vv_{prior}(x)) M_k(x)$ is a linear combination of $Y(x_1),\ldots,Y(x_n)$ with at least $p$ non-zero coefficients (since each $M_k(x)$ is a linear combination of one or two elements of $Y(x_1),\ldots,Y(x_n)$, all these elements being two-by-two distinct, see the beginning of the proof of Proposition~\ref{prop:perfect:clustering:markov}). Hence, because the covariance matrix of $Y(x_1),\ldots,Y(x_n)$ is invertible, $M_\agg(x)$ can not be equal to $M_{\full}(x)$ almost surely, since $M_{\full}(x)$ is a linear combination of $Y(x_1),\ldots,Y(x_n)$ with one or two non-zero coefficients.
  \qedperso
\end{proof}

The above Proposition applies to the POE, GPOE, BCM and RBCM procedures presented in Sect.~\ref{section:non:consistency}.

\subsection{Empirical results}
The aim of this section is to illustrate \did{Proposition}~\ref{prop:perfect:clustering:markov}, and to study the influence of the allocation of the observation points to the sub-models. Two opposite strategies are indeed possible: the first one consists in allocating all the points in one region of the input space to the same sub-model (which is then accurate in this region but not informative elsewhere). The second is to have, for each sub-model, points that are uniformly distributed among the set of observations which leads to having a lot of sub-models that are weekly informative. \nicolas{This section illustrates the impact of this choice on the \NK{} MSE}.

\nicolas{The experiment is as follow. A set of $32^2 = 1024$ observation points are distributed on a regular grid in one dimension and two methods are considered for creating 32 subsets of points: a k-means clustering and the optimal clustering which consists in grouping together sets of 32 consecutive points. These initial grouping of points can be used to build sub-models that are experts in their region of the space. In order to study the influence of the quality of the clustering is, the clusters are perturbed by looping over all observations points and for each of them the group is swapped with another random observation point with probability $p$. The value $p$ can then be used as a measure of the disorder in the group assignment: for $p=0$ the groups are perfect clusters and for $p=1$, each observation is assigned a group at random.}
   
Figure~\ref{fig:clustering_effect} (top) shows the MSE of one \did{dimensional} \NK{} models as a function of $p$, for test functions that correspond to samples of Gaussian processes and a test set of 200 uniformly distributed points. The covariance functions of the Gaussian processes are either the exponential or the Gaussian (i.e. squared exponential) kernels, with unit variance and a lengthscale such that the covariance between two neighbour points is 0.5. As predicted by \did{Proposition}~\ref{prop:perfect:clustering:markov}, the error is null for $p=0$ (which corresponds to a perfect clustering) when using an exponential kernel. Although this is not supported by theoretical guaranties, one can see that the prediction error is also the smallest for at $p=0$ for a Gaussian kernel. Finally, one can note that the choice of the initial clustering method does not have a strong influence on the MSE. \nicolas{This can probably be explained by the good performance of the k-means algorithm in one dimension}.

For the sake of completeness, the experiment \nicolas{is repeated with the same settings as above except for the input space dimension that is changed from one to five, and the locations of the $1024=4^5$ input points that are now given by the grid $\{1/8,\ 3/8,\ 5/8,\ 7/8\}^5$. With such settings, the optimal clustering of the observations can be obtained analytically with the $32=2^5$ cluster centers located at $\{1/4,\ 3/4 \}^5$. As previously, the models that perform the best} are obtained with $p=0$. Furthermore, the difference between the two clustering methods is now more pronounced and the MSE obtained with k-means is always higher than the one with the optimal clustering.
    
These two experiments, together with the theoretical result in dimension one, suggest it is good practice to apply a clustering algorithm to decide how to assign the observation points to the sub-models.

\begin{figure}
  \centering
  \subfloat[input dimension = 1, exponential kernel]{\includegraphics[width=7cm]{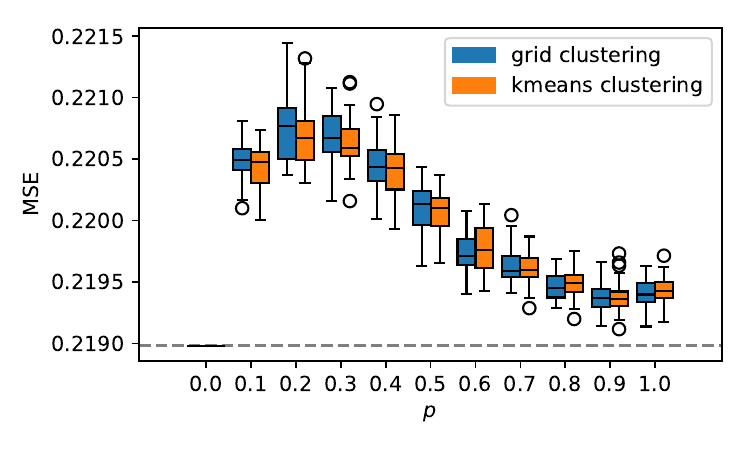}} \qquad
  \subfloat[input dimension = 1, Gaussian kernel]{\includegraphics[width=7cm]{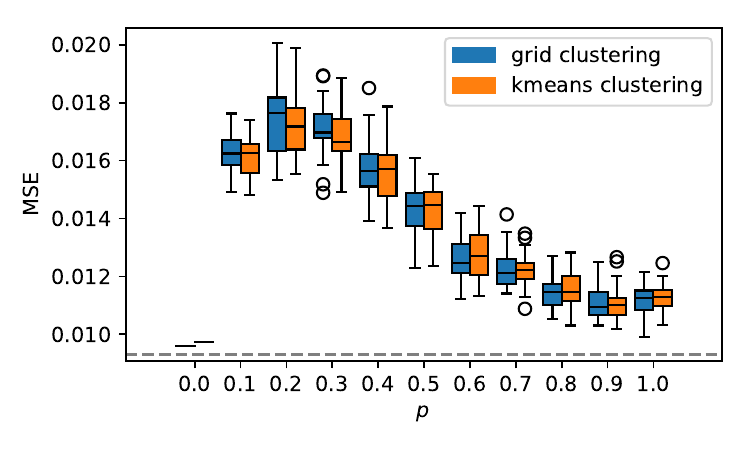}} \\
  \subfloat[input dimension = 5, exponential kernel]{\includegraphics[width=7cm]{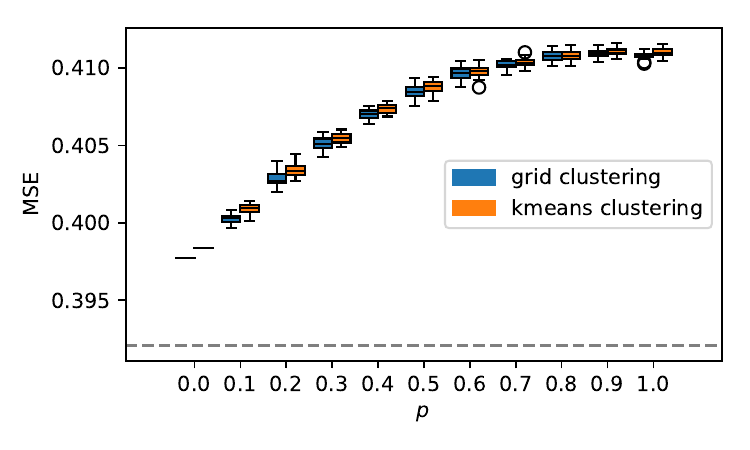}} \qquad
  \subfloat[input dimension = 5, Gaussian kernel]{\includegraphics[width=7cm]{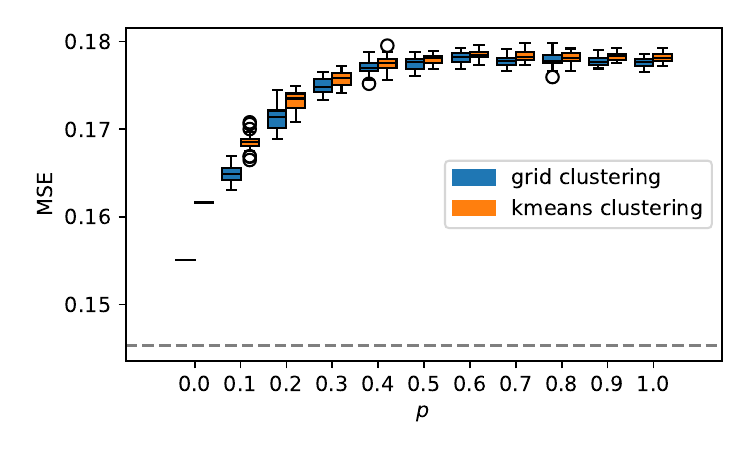}}
  \caption{\MAJNK{} MSE as a function of how clustered the inputs of the sub-models are. For small values of $p$, sub-models are built on points that tend to form clusters whereas they do not for large values of $p$. The horizontal dashed line corresponds to the optimal MSE, which is obtained with full Kriging model.}
\label{fig:clustering_effect}
\end{figure}


\section{{Extensions of the \NK{} prediction}}
\label{section:extensions}
\nicolas{This section extends \NK{} to the cases where the Gaussian process $Y$ is observed with measurement errors, and where $Y$ has a parametric mean function. It also provides theoretical guaranties similar to Propositions~\ref{prop:non:consistency} and ~\ref{prop:consistency} on the (non-)consistency of various aggregation methods in the noisy setting.}

\subsection{\fb{Measurement errors}} \label{subsection:measurement:errors}


\nicolas{This section assumes that the vector of observation is given by $Y(X) + \xi_X$ where the vector of measurement errors $\xi_X = (\xi_{1},\ldots,\xi_{n})^t$ has independent components, independent of $Y$, and where $\xi_{i} \sim \mathcal{N}(0,\eta_i)$ for $i = 1,\ldots,n$, with the error variances $\eta_1 >0,\ldots,\eta_n>0$.}
\fb{
Consider a partition $X_1,\ldots,X_p$ of $X$, where $X_1,\ldots,X_p$ have cardinalities $n_1,\ldots,n_p$. For $i=1,\ldots,p$, write $\xi_{X_i}$ as the subvector of $\xi_X$ corresponding to the submatrix $X_i$. Write also $D_i = \Cov{ \xi_{X_i} }$. Then, for $x \in D$,
the Kriging sub-model based on the noisy observations of $Y(X_i)$ is
\begin{equation} \label{eq:submodel:noisy}
M_{\eta,i}(x)
=
k(x,X_i) (k(X_i,X_i) + D_i )^{-1}
(Y(X_i) + \xi_{X_i}).
\end{equation}
Note that $M_{\eta,i}(x)$ is the best linear unbiased predictor of $Y(x)$ from $Y(X_i) + \xi_{X_i}$.
Then, the best linear unbiased predictor of $Y(x)$ from $M_{\eta,1}(x),\ldots,M_{\eta,p}(x)$ is
\begin{equation} \label{eq:M:agg:eta}
M_{\agg,\eta}(x)
=
k_{M,\eta}(x)^t K_{M,\eta}(x)^{-1} M_{\eta}(x).
\end{equation}
In \eqref{eq:M:agg:eta}, $M_{\eta}(x) = (M_{\eta,1}(x),\ldots,M_{\eta,p}(x))^t$ is the vector of sub-models, $K_{M,\eta}(x)$ is the covariance matrix of $M_{\eta}(x)$ and $k_{M,\eta}(x)$ is the $p \times 1$ covariance vector between $M_{\eta}(x)$ and $Y(x)$. \nicolas{For $i,j =1,\ldots,p$, their entries are}
\begin{align}
(k_{M,\eta}(x))_i &= k(x,X_i) (k(X_i,X_i) + D_i )^{-1} k(X_i,x) \virguleacc \\
(K_{M,\eta}(x))_{i,j} &= k(x,X_i) (k(X_i,X_i) + D_i )^{-1} k(X_i,X_j) (k(X_j,X_j) + D_j )^{-1} k(X_j,x) \pointacc
\end{align}
\nicolas{The mean square error can also be computed analytically}
\begin{equation} \label{eq:v:agg:eta}
\vv_{\agg,\eta}(x) = \Esp{ (Y(x) - M_{\agg,\eta}(x))^2 } =  k(x,x) -  \transpose{k_{M,\eta}(x)}K_{M,\eta}(x)^{-1}k_{M,\eta}(x).
\end{equation} 

\nicolas{Equations \eqref{eq:M:agg:eta} and \eqref{eq:v:agg:eta} follow from the same standard proof as in the case where there are no measurement errors, see for instance \cite{rulliere2018nested}. The computational complexity and storage requirement of these expressions are the same as their counterpart without measurement errors. In order to analyse more finely the cost of computing $M_{\agg,\eta}(x)$ and $\vv_{\agg,\eta}(x)$, the computations is broken down in four steps: (1) computing and storing the vectors $(k(x,X_i) (k(X_i,X_i) + D_i )^{-1})_{i=1,\ldots,p}$, (2) computing and storing $(M_{\eta,i}(x))_{i=1,\ldots,p}$, (3) computing and storing $K_{M,\eta}(x)$ and $(K_{M,\eta}(x))^{-1}$ and (4) computing} $M_{\agg,\eta}(x)$ and $\vv_{\agg,\eta}(x)$.

Assume that $X_1,\ldots,X_p$ have cardinalities of order $n/p$ for simplicity. Then the computational complexity of steps (1-4) are respectively $O(p (n/p)^3)$, $O(p (n/p) )$, $O( p ^2 (n/p) ^2 + p^3)$ and $O(p (n/p) + p^2)$. The total computational cost is thus $O( n^3/p^2 + n^2 + p^3 )$, which boils down to $O(n^2)$ by taking $p$ of order $n^\beta$ with $\beta \in [1/2,2/3]$ (as opposed to $O(n^3)$ for the full Kriging predictor). The storage requirement (including intermediary storage) of steps (1) to (4) is $O((n/p)^2 + p(n/p) + p^2)$. This cost becomes minimal for $p$ of order $n^{1/2}$, reaching $O(n)$ (as opposed to $O(n^2)$ for the full Kriging predictor).

\nicolas{The remaining of this section focuses on consistency results when observations are corrupted with noise.} Similarly to Prop. \ref{prop:consistency}, the following results considers infill asymptotics and a triangular array of observation points.

\begin{proposition}[Sufficient condition for \NK{} consistency with measurement errors]
Let $D$ be a fixed nonempty subset of $\mathds{R}^d$. Let $\Y$ be a Gaussian process on $D$ with mean zero and continuous covariance function $k$. Let $(x_{ni})_{1 \leq i \leq n, n \in \mathds{N}}$ be a triangular array of observation points such that $x_{ni} \in D$ for all $1 \leq i \leq n, n \in \mathds{N}$.
For \did{$n \in \mathbb{N}$}, let $X = (x_{n1},\ldots,x_{nn})^t$
and let $Y(X) + \xi_X$ be observed, where $\xi_X = (\xi_{1} , \ldots , \xi_{n})^t$ has independent components, with $\xi_{i} \sim \mathcal{N}(0,\eta_i)$, where $(\eta_i)_{i \in \mathbb{N}}$ is a bounded sequence. Let also $\xi_X$ be independent of $Y$. 
Let $x \in D$ be fixed. For $n \in \mathds{N}$, let $\M_{\eta,1}(x),...,\M_{\did{\eta},p_n}(x)$ be defined from \eqref{eq:submodel:noisy}, for a partition  $X_1,\ \dots, \ X_{p_n}$ of $X$.

Assume the following sufficient condition: for all $\epsilon >0$, there exists a sequence $(i_n)_{n \in \mathbb{N}}$, such that $i_n \in \{1 , \ldots,p_n \}$ for $n \in \mathbb{N}$ and such that the number of points in $X_{i_n}$ at Euclidean distance less than $\epsilon$ from $x$ goes to infinity as $n \to \infty$.

Then \nicolas{for} $M_{\agg,\eta}(x)$ \nicolas{defined} as in \eqref{eq:M:agg:eta},
\begin{equation*}
 \mathbb{E} \left[
\left(
Y(x) - M_{\agg,\eta}(x)
\right)^2
\right]
\to_{n \to \infty} 0.
\end{equation*}

\label{prop:consistency:noisy}
\end{proposition}

In Proposition \ref{prop:consistency:noisy}, the interpretation of the sufficient condition for consistency is that, when $n$ is large, at least one of the subsets $X_1,\ldots,X_{p_n}$ contains a large number of observation points close to the prediction point $x$.
If the minimal size of the subsets $X_1,\ldots,X_{p_n}$ goes to infinity, and if these subsets are obtained from a clustering algorithm, that is the points in a subset are close to each other, then the sufficient condition in Proposition \ref{prop:consistency:noisy} typically holds. \nicolas{This can be seen as an additional argument} in favor of selecting the subsets from a clustering algorithm. This is in agreement with \fb{Sect.} \ref{section:impact:clustering}, which conclusions also support clustering algorithms.

A particular case where the condition of Proposition \ref{prop:consistency:noisy} always holds (regardless of how the partition into subsets is made) is when the triangular array of observation points is a sequence of randomly sampled points, with a strictly positive sampling density, and when the number of subsets is asymptotically smaller than $n$.

\begin{lemma} \label{lemma:consistency:noisy:random:points}
Let $D$ be fixed, bounded with non-empty interior. Let $x$ in the interior of $D$ be fixed.
Consider a triangular array of observation points $(x_{ni})_{1 \leq i \leq n, n \in \mathds{N}}$ that is obtained from a sequence $(x_i)_{i \in \mathbb{N}}$, that is $x_{ni} = x_i$ for $1 \leq i \leq n, n \in \mathds{N}$. Assume that $(x_i)_{i \in \mathbb{N}}$ are independently sampled from a distribution with strictly positive density $g$ on $D$. Consider any sequence of partitions $(X_1,\ldots,X_{p_n})_{n \in \mathbb{N}}$ of $x_1,\ldots,x_n$. Assume that $p_n = o (n)$ as $n \to \infty$. Then, almost surely with respect to the randomness of $(x_i)_{i \in \mathbb{N}}$, the sufficient condition of Proposition \ref{prop:consistency:noisy} holds.
\end{lemma}

The theoretical setting of Lemma \ref{lemma:consistency:noisy:random:points} is realistic with respect to situations where the observation points are not too irregulalry spaced over $D$. The setting $p_n = o(n)$ is \nicolas{particularly relevant for} the \NK{} predictor, since this setting is necessary to obtain a smaller order of computational complexity than the full Kriging predictor.


\nicolas{The following Proposition shows that there are situations with measurement errors where the \NK{} predictor is consistent whereas other aggregation methods that do not use the covariances between the sub-models are inconsistent}. These situations are constructed similarly as in Proposition \ref{prop:non:consistency}. In particular, Proposition \ref{prop:non:consistency:noisy} applies to the extensions of POE, GPOE, BCM and RBCM methods to the case of measurement errors (see the references given in \fb{Sect.} \ref{section:non:consistency}). 

\begin{proposition}[non-consistency of some covariance-free aggregations with measurement errors] \label{prop:non:consistency:noisy}
\nicolas{Consider} $D$, $Y$ and $k$ satisfying the same conditions as in Proposition \ref{prop:non:consistency}.  Let $(\eta_i)_{i \in \mathbb{N}}$ be a bounded sequence. For any triangular array of observation points $(x_{ni})_{1 \leq i \leq n, n \in \mathbb{N}}$, let, for $n \in \mathbb{N}$, $X$ be the $n \times d$ matrix with row $i$ equal to $x_{n,i}^t$.
Let $\xi_X$ be as in Proposition \ref{prop:consistency:noisy}.
Then, for any partition $X_1,\ldots,X_{p_n}$ of $X$, for $i=1,\ldots,p_n$, let $n_i$ be the cardinality of $X_i$, let $M_{\eta,i}(x)$ be defined as in \eqref{eq:submodel:noisy} and $v_{\eta,i}(x) = k(x,x) - k(x,X_i)(k(X_i,X_i) + D_i)
^{-1} k(X_i,x)$, with $D_i$ also as in \eqref{eq:submodel:noisy}. Let then $M_{\agg,\eta,n}(x)$ be defined as $M_{\agg,n}(x)$ in Proposition \ref{prop:non:consistency}, with the same assumption \eqref{eq:assumption:aggregation:for:inconsistency}, with $v_i(x)$ replaced by $v_{\eta,i}(x)$. 

Then there exist a fixed $x \in D$, a triangular array of observation points $(x_{ni})_{1 \leq i \leq n, n \in \mathbb{N}}$, and a sequence of partitions $X_1,\ldots,X_{p_n}$ of $X$, that satisfy the sufficient condition of Proposition \ref{prop:consistency:noisy}, and such that
\[
\liminf_{n \to \infty}
\mathbb{E}
\left[
\left(
M_{\agg,\eta,n}(x)
-
Y(x)
\right)^2
\right]
> 0.
\]
\end{proposition}
}

\subsection{\fb{Universal Kriging}} \label{subsection:universal:kriging}

\fb{
Consider here the case where \nicolas{the Gaussian process $Z$ defined with a trend}, for $x \in D$, 
\[
Z(x) = \sum_{i=1}^m \beta_i h_i(x) + Y(x),
\]
where $Y$ is, as above, a centered Gaussian process on $D$ with mean zero and covariance function $k$, where the functions $h_1,\ldots,h_m : D \to \mathbb{R}$ are known and where the vector $\beta = (\beta_1,\ldots,\beta_m)^t$ is unknown. This is the setting of universal Kriging \citep{chiles2009geostatistics}. 

Consider the partition $X_1,\ldots,X_p$ of $X$ with cardinalities $n_1,\ldots,n_p$. For $i=1,\ldots,p$, let $H_i$ be the $n_i \times m$ matrix $( h_1(X_i),\ldots,h_m(X_i) )$.
Then the best linear unbiased predictor of $Z(x)$ given $Z(X_i)$ is \citep{sacks89design}
\[
M_{\text{UK},i}(x)
=
h(x)^t \hat{\beta}_i
+
k(x,X_i) k(X_i,X_i)^{-1} \left( Z(X_i) - H_i \hat{\beta_i} \right),
\]
with $h(x) = (h_1(x) , \ldots , h_m(x))^t$ and
\[
\hat{\beta}_i
=
\left( H_i^t k(X_i,X_i)^{-1} H_i \right)^{-1} H_i^t k(X_i,X_i)^{-1} Z(X_i) \pointacc
\]
\nicolas{The predictor} $M_{\text{UK},i}(x)$ is a linear function of $Z(X_i)$, satisfies $\mathbb{E}[ M_{\text{UK},i}(x) ] = \mathbb{E}[  Z(x)  ]$ (for all the possible values of $\beta$ in $\mathbb{R}^m$) and has smallest mean square prediction error among all the predictors with these two properties. 

The next proposition provides the linear aggregation of $M_{\text{UK},1}(x), \ldots , M_{\text{UK},p}(x)$ that is unbiased and has the smallest mean square prediction error.
It thus gives an extension of \NK{} to the universal Kriging case.

\begin{proposition} \label{prop:universal:kriging}
Let $M_{\text{UK}}(x) = ( M_{\text{UK},1}(x), \ldots , M_{\text{UK},p}(x) )^t$.
For $i=1,\ldots,p$ let 
\begin{align*}
w_i(x)^t 
& =
h(x)^t \left( H_i^t k(X_i,X_i)^{-1} H_i \right)^{-1} H_i^t k(X_i,X_i)^{-1}
    -
    k(x,X_i) k(X_i,X_i)^{-1} H_i  \left( H_i^t k(X_i,X_i)^{-1} H_i \right)^{-1} H_i^t k(X_i,X_i)^{-1}
    \\
    & ~ ~
    +
    k(x,X_i) k(X_i,X_i)^{-1}.
\end{align*}
Let $K_{\text{UK},M}(x)$ be the $p \times p$ matrix defined by, for $i,j=1,\ldots,p$,
\begin{align*}
    (K_{\text{UK},M}(x))_{i,j}
    & =
    w_i(x)^t k(X_i,X_j) w_j(x).
\end{align*}
Let $k_{\text{UK},M}(x)$ be the $p \times 1$ vector defined by, for $i=1,\ldots,p$,
\begin{align*}
    (k_{\text{UK},M}(x))_{i}
    & =
    w_i(x)^t k(X_i,x).
\end{align*}
Let 
\[
\hat{m}_{\text{UK},M}(x)
= 
(\mathrm{1}_p^t K_{\text{UK},M}(x)^{-1} \mathrm{1}_p )^{-1}
\mathrm{1}_p^t K_{\text{UK},M}(x)^{-1}
M_{\text{UK}}(x),
\]
with $\mathrm{1}_p$ the $p \times 1$ vector with entries equal to one.
Let then
\begin{equation} \label{eq:UK:interpretation}
M_{\agg,\text{UK}}(x) = 
\hat{m}_{\text{UK},M}(x)
+
k_{\text{UK},M}(x)^t
K_{\text{UK},M}(x)^{-1}
\left(
M_{\text{UK}}(x) 
-
\hat{m}_{\text{UK},M}(x)
\mathrm{1}_p
\right).
\end{equation}
Then $M_{\agg,\text{UK}}(x)$ is a linear function of $M_{\text{UK},1}(x),\ldots,M_{\text{UK},p}(x)$, satisfies $\mathbb{E}[ M_{\agg,\text{UK}}(x) ] = \mathbb{E}[  Z(x)  ]$ (for all the possible values of $\beta$ in $\mathbb{R}^m$) and has smallest mean square prediction error among all the predictors with these two properties. The vector of aggregation weights is
\begin{align*}
\alpha_{\agg,\text{UK}}(x)^t
 = &
(\mathrm{1}_p^t K_{\text{UK},M}(x)^{-1} \mathrm{1}_p )^{-1}
\mathrm{1}_p^t K_{\text{UK},M}(x)^{-1}
\\
& -
k_{\text{UK},M}(x)^t
K_{\text{UK},M}(x)^{-1}
 \mathrm{1}_p
(\mathrm{1}_p^t K_{\text{UK},M}(x)^{-1} \mathrm{1}_p )^{-1}
\mathrm{1}_p^t K_{\text{UK},M}(x)^{-1}
\\
& +
k_{\text{UK},M}(x)^t
K_{\text{UK},M}(x)^{-1}.
\end{align*}
Then $M_{\agg,\text{UK}}(x) = \alpha_{\agg,\text{UK}}(x)^t M_{\text{UK}}(x)$ and
the mean square error is given by
\begin{align} \label{eq:v:agg:UK}
v_{\agg,\text{UK}}(x)
 =
\mathbb{E}
\left[
\left(
M_{\agg,\text{UK}}(x)
- 
Z(x)
\right)^2
\right] 
 =
k(x,x) 
+
\alpha_{\agg,\text{UK}}(x)^t
K_{\text{UK},M}(x)
\alpha_{\agg,\text{UK}}(x)
 -
2 
\alpha_{\agg,\text{UK}}(x)^t
k_{\text{UK},M}(x). 
\end{align}
\end{proposition}

The aggregated predictor $M_{\agg,\text{UK}}(x)$ can be interpreted as a universal Kriging predictor of $Z(x)$, with the ``observations'' $M_{\text{UK},1}(x), \ldots , M_{\text{UK},p}(x)$, and with a constant unknown mean. This is particularly apparent in \eqref{eq:UK:interpretation}, and can be further understood in the proof of Proposition \ref{prop:universal:kriging}. \nicolas{It is worth noting} that the ``observations'' $M_{\text{UK},1}(x), \ldots , M_{\text{UK},p}(x)$ are already themselves universal Kriging predictors. Hence, it turns out that there are two nested steps of universal Kriging predictions when extending the \NK{} predictor to universal Kriging.

\nicolas{Computing $M_{\agg,\text{UK}}(x)$ and $v_{\agg,\text{UK}}(x)$ can be done similarly to what has been proposed in \fb{Sect.}} \ref{subsection:measurement:errors}.
More precisely, the four computational steps are (1) to compute and store the vectors $(w_i(x))_{i=1,\ldots,p}$, (2) to compute and store $(M_{\text{UK},i}(x))_{i=1,\ldots,p}$, with $M_{\text{UK},i}(x) = w_i(x)^t Z(X_i)$ for $i=1,\ldots,p$, (3) to compute and store $K_{\text{UK},M}(x)$ and $(K_{\text{UK},M}(x))^{-1}$ and (4) to compute $M_{\agg,\text{UK}}(x)$ and $v_{\agg,\text{UK}}(x)$.

\nicolas{To analyze the computational complexity and storage requirement, assume that} $X_1,\ldots,X_p$ have cardinalities of order $n/p$ for simplicity. Assume also that $m$ is small compared to $n/p$ and $p$, which is quite realistic in the framework of Kriging with big data, since the number of functions $h_1,\ldots,h_m$ is typically moderate.
Then the computational cost of step (1) is $O(p (n/p)^3)$, the computational cost of step (2) is $O(p (n/p) )$, the computational cost of step (3) is $O( p ^2 (n/p) ^2 + p^3)$ and the computational cost of step (4) is $O( p(n/p) +  p^2)$. As in \fb{Sect.} \ref{subsection:measurement:errors}, the total computational cost is $O( n^3/p^2 + n^2 + p^3 )$ and can reach $O(n^2)$ by taking $p$ of order $n^\beta$ with $\beta \in [1/2,2/3]$. Also as in \fb{Sect.} \ref{subsection:measurement:errors},
the storage cost (including intermediary storage) of steps (1) to (4) is $O((n/p)^2 + p(n/p) + p^2)$ and reaches $O(n)$ by taking $p$ of order $n^{1/2}$.
}

\section{Concluding remarks} \label{section:conclusion}

\nicolas{This article proposes a theoretical analysis of several aggregation procedures recently proposed in the literature, aiming at combining predictions from} Kriging sub-models constructed separately from subsets of a large data set of observations. \nicolas{It is} shown that aggregating the sub-models based only on their conditional variances can yield inconsistent aggregated Kriging predictors. In contrasts, the consistency of the \NK{} procedure \citep{rulliere2018nested}, which explicitly takes into account the correlations between the sub-model predictors, has been proved. The article also shed some light on this procedure, by showing that it provides an exact conditional distribution, for a different Gaussian process prior, and by obtaining bounds on the differences with the exact full Kriging model. \nicolas{Further results on the the efficient computation of conditional covariances have also been presented}, which make possible sampling from the posterior distribution. The impact of the observation assignment to the sub-models has also been investigated, which resulted in some evidence that it is good practice to build them on clusters of observation points. \fb{Finally, the procedure of \citet{rulliere2018nested} has been extended to measurement errors and to universal Kriging, while retaining the same computational complexity and storage requirement.}

Some perspectives remain open. It would be beneficial to improve the aggregation methods of Sect.~\ref{section:non:consistency}, in order to guarantee their consistency while keeping their low computational costs. Finally, the interpretation of the predictor in \citet{rulliere2018nested} as an exact conditional expectation could be the basis of further asymptotic studies, as discussed in Sect.~\ref{section:GP:perspective}.

\bibliographystyle{apalike}
\bibliography{biblio}

\appendix

\section{Proof of Proposition~\ref{prop:nonconsistency}} \label{app:nonconsistency}

For $v \in \R^m$, we let $|v| = \max_{i=1,\ldots,m} |v_i|$ and $B(v,r) = \{w \in \R^m,|v-w| \leq r\}$.

Let $x_0,\bar{x} \in D$, $r_{x_0}>0$ and $r_{\bar{x}}>0$ be fixed and satisfy $B(x_0,r_{x_0}) \subset D$, $B(\bar{x},r_{\bar{x}}) \subset D$, $B(x_0,r_{x_0}) \cap B(\bar{x},r_{\bar{x}}) = \varnothing$ and $k(x_0,\bar{x}) >0$. [The existence is implied by the assumptions of the proposition.]
By continuity of $k$, $r_{x_0} >0$ and $r_{\bar{x}} >0$ can be selected small enough so that, with some fixed $\epsilon_2 >0$ and $\delta_1 >0$, for $v \in B( x_0 , r_{x_0} )$ and $w \in B( \bar{x} , r_{\bar{x}} )$,
$| v - w | \geq \delta_1$,
 $ k(x_0,x_0) /2 \leq k(v,v) \leq 2k(x_0,x_0)$, $ k(\bar{x},\bar{x}) /2 \leq k(w,w) \leq 2k(\bar{x},\bar{x})$ and
\begin{equation} \label{eq:new:cond:var:v:w}
k(v,v) - \frac{k(v,w)^2}{k(w,w)}
\leq k(v,v) - \epsilon_2.
\end{equation}

For $\delta >0$, let
\[
V(\delta) = \inf_{n \in \mathds{N}} \inf_{\substack{x_0,x_1,...,x_n \in D; \\ \forall i=1,...,n, |x_i - x_0| \geq \delta}} \Var{ Y(x_0) | Y(x_1),...,Y(x_n) }.
\]
Then $V(\delta)>0$ because of the NEB, by continuity of $k$ and by compacity.

Consider a decreasing sequence $\delta_n$ of non-negative numbers such that $\delta_n \to_{n \to \infty} 0$, and which will be specified below.
There exists a sequence $(u_n)_{n \in \mathds{N}} \in D^{\mathds{N}}$, composed of pairwise distinct elements, such that $\lim_{n \to \infty} \sup_{x \in D}\min_{i=1,...,n} | u_{i} - x | = 0$, and such that for all $n$, 
\[
\inf_{ \substack{1 \leq i,j \leq n \\ i \neq j \\ u_i,u_j \in B(x_0,r_{x_0} )}} |u_i - u_j| \geq 4 \delta_n.
\]
Such a sequence indeed exists from Lemma~\ref{lemma:the:sequence} below.

Consider then the sequence $(w_n)_{n \in \mathds{N}} \in D^{\mathds{N}}$ such that for all $n$, $w_n = \bar{x} -(r_{\bar{x}}/(1+n)) e_1$ with $e_1=(1,0,...,0)$. We can assume furthermore that $\{u_n \}_{n \in \mathds{N}}$ and $\{w_n \}_{n \in \mathds{N}}$ are disjoint (this holds almost surely with the construction of Lemma~\ref{lemma:the:sequence} for $(u_n)$).

Let us now consider two sequences of integers $p_n$ and $k_n$ with $k_n \to \infty$ and $p_n \to \infty$ to be specified later. Let $C_n$ be the largest natural number $m$ satisfying $m (p_n-1) < n$. Let $X = (X_1,...,X_{p_n})$ be defined by, for $i=1,...,k_n$, $X_i = ( u_j)_{ j=(i-1)C_n + 1,...,i C_n }$; for $i=k_n+1,...,p_n-1$, $X_i = ( w_j)_{j=(i-k_n-1) C_n + 1,...,(i - k_n) C_n }$; and
$X_{p_n} = ( w_j)_{j=(p_n-k_n-1) C_n + 1,...,n- k_n C_n }$. With this construction, note that $X_{p_n}$ is nonempty. Furthermore, the sequence of vectors $X = (X_{1},...,X_{p_n})$, indexed by $n \in \mathbb{N}$, defines a triangular array of observation points satisfying the conditions of the proposition.

Let us discuss the construction of
$(u_n)_{n \in \mathbb{N}}$, $(w_n)_{n \in \mathbb{N}}$, $k_n$, $C_n$ and $p_n$ more informally. The sequence $(u_n)_{n \in \mathbb{N}}$ is dense in $D$, and $X_1,\ldots,X_{k_n}$ are composed by the $k_n C_n$ first points of this sequence. Then, $X_{k_n+1},\ldots,X_{p_n}$ are composed by the $n - C_n k_n$ first points of the sequence $(w_n)_{n \in \mathbb{N}}$, which is concentrated around $\bar{x}$. We will let $k_n / p_n \to 0$ so that the majority of the groups in $X$ contain points of $(w_n)_{n \in \mathbb{N}}$, so that they do not contain relevant information on the values of $Y$ on $B(x_0,r_{x_0})$ and yield an inconsistency of the aggregated predictor $M_{\agg,n}$ on $B(x_0,r_{x_0})$.

Coming back to the proof, observe that $\inf_{i \in \mathds{N}} \inf_{x \in B(x_0,r_{x_0})} |w_i - x| \geq \delta_1$ and let $\epsilon_1 = V(\delta_1) >0$. Then, we have for all $n \in \mathds{N}$,
for all $x \in B(x_0,r_{x_0})$,
and for all $k=k_n+1,...,p_n$, since then $X_k$ is nonempty and only contains elements $w_i \in B(\bar{x},r)$, from~\eqref{eq:new:cond:var:v:w},
\begin{equation} \label{eq:with:epsun:epsdeux}
\epsilon_1 \leq \vv_k(x) \leq k(x,x) - \epsilon_2.
\end{equation}
Let $\mathcal{E}_n = \{x \in B(x_0,r_{x_0}) ; \min_{i=1,\ldots,n} | x - u_i | \geq \delta_n \}$ and let $x \in \mathcal{E}_n$.
Since $x$ is not a component of $X$, we have $\vv_k(x) >0$ for all $k$. Also $\vv_{p_n}(x) < k(x,x)$ from~\eqref{eq:with:epsun:epsdeux}. Hence, $M_{\agg,n}(x)$ is well-defined.

For two random variables $A$ and $B$, we let $||A-B|| = (\Esp{(A-B)^2})^{1/2}$. Let, for $x \in \mathcal{E}_n$,
\begin{eqnarray*}
R(x) & = & \left| \left | \sum_{k=1}^{k_n} \alpha_{k,n}( \vv_1(x),...,\vv_{p_n}(x),\vv_{prior}(x) )  M_k(x) \right| \right|.
\end{eqnarray*}

Then, from the triangular inequality, and since, from the law of total variance, $|| M_k(x) || \leq || Y( x ) || = \vv_{prior}(x)^{1/2}$ we have, with $\mathcal{V} =  \{ k(x,x) ; x \in B(x_0,r(x_0)) \} $,
\begin{eqnarray*}
R(x) & \leq & \frac{
\sum_{k=1}^{k_n} a( \vv_k(x) , \vv_{prior}(x) ) \sqrt{\vv_{prior}(x)}
}{
\sum_{l=1}^{p_n} b( \vv_l(x) , \vv_{prior}(x) )
}  \\
& \leq &
\frac{
k_n \sup_{v \in \mathcal{V}, V(\delta_n) \leq s^2 \leq v} a( s^2,v ) \sqrt{v}
}{
(p_n - k_n) \inf_{v \in \mathcal{V}, \epsilon_1 \leq s^2 \leq v - \epsilon_2} b( s^2,v )
},
\end{eqnarray*}
where the last inequality is obtained from~\eqref{eq:with:epsun:epsdeux} and the definition of $\delta_n$ and $V(\delta)$.

Let now for $\delta>0$, $s(\delta) = \sup_{v \in \mathcal{V},V(\delta) \leq s^2 \leq v } a( s^2 , v )$.
Since $a$ is continuous and since $V(\delta) >0$, we have that $s(\delta)$ is finite. Hence, we can choose a sequence $\delta_n$ of positive numbers such that $\delta_n \to_{n \to \infty} 0$ and $s(\delta_n) \leq \sqrt{n}$ (for instance, let $\delta_n = \inf \{ \delta \geq n^{-1/2}; s(\delta) \leq n^{1/2} \}$). Then, we can choose $p_n = n^{4/5}$ and $k_n = n^{1/5}$. Then, for $n$ large enough
\[
\frac{k_n}{p_n - k_n} s(\delta_n) \leq 2 n^{-3/5} \sqrt{n} \to_{n \to \infty} 0.
\]
Hence, since
\[
\frac{ \sup_{v \in \mathcal{V}} \sqrt{v} }{ \inf_{v \in \mathcal{V},\epsilon_1 \leq s^2 \leq v - \epsilon_2 }  b(s^2,v) }
\]
is a finite constant, as $b$ is positive and continuous on $\mathring{\Delta}$, we have that $ \sup_{x \in \mathcal{E}_n} R(x) \to_{n \to \infty} 0$. As a consequence, we have from the triangular inequality, for $x \in \mathcal{E}_n$
\begin{align*}
|| Y(x) - M_{\agg,n}(x) || 
\geq &
||Y(x) -  \sum_{k=k_n+1}^{p_n} \alpha_{k,n}( \vv_1(x),...,\vv_{p_n}(x) , \vv_{prior}(x) ) M_k(x) ||
 \\
 & -
|| \sum_{k=k_n+1}^{p_n} \alpha_{k,n}( \vv_1(x),...,\vv_{p_n}(x) , \vv_{prior}(x) ) M_k(x) - M_{\agg,n}(x)  ||  
 \\
 \geq &
\inf_{x \in \mathcal{E}_n}
\left| \left|
Y(x) -
\sum_{k=k_n+1}^{p_n} \alpha_{k,n}( \vv_1(x),...,\vv_{p_n}(x) , \vv_{prior}(x) ) M_k(x)
\right| \right|
\\
& -
 \sup_{x \in \mathcal{E}_n} R(x).
\end{align*}
Since $X_{k_n+1},...,X_{p_n}$ are composed only of elements of $\{ w_i \}_{i \in \mathds{N}}$, we obtain
\[
\liminf_{n \to \infty} 
\inf_{x \in \mathcal{E}_n}
|| Y(x) - M_{\agg,n}(x) ||
\geq V(\delta_1) > 0.
\]
Hence, there exist fixed $n_0 \in \mathbb{N}$ and $A >0$ so that for $n \geq n_0$, $|| Y(x) - M_{\agg,n}(x) || \geq A$.
Hence, we have, for $n \geq n_0$
\begin{align*}
\int_{D}
\Esp{\left(Y(x) - M_{\agg,n}(x)\right)^2}
dx
\geq  &
\int_{\mathcal{E}_n}
\Esp{\left(Y(x) - M_{\agg,n}(x)\right)^2}
\\
\geq &
\int_{\mathcal{E}_n}
A^2
dx.
\end{align*}
Hence, it remains to show that the limit inferior of the volume of $\mathcal{E}_n$ is not zero in order to show~\eqref{eq:int:pred:large}. 
Let $N_n$ be the integer part of $r_{x_0} / 4 \delta_n$. Then, 
the ball $B(x_0,r_{x_0})$ contains $(2 N_n)^d$ disjoint balls of the form $B(a,4 \delta_n)$ with $a \in B(x_0,r_{x_0})$. If one of these balls $B(a,4 \delta_n)$ does not intersect with $(u_i)_{i=1\ldots,n}$, then we can associate to it a ball of the form $B(s_a,\delta_n) \subset B(a,4 \delta_n) \cap \mathcal{E}_n$. If one of these balls $B(a,4 \delta_n)$ does intersect with one $u_j \in \{u_i\}_{i=1\ldots,n}$, then we can find a ball $B(s_a, \delta_n/2  ) \subset ( B(u_j, 2 \delta_n) \backslash B(u_j, \delta_n) ) \cap B(a,4 \delta_n) \cap \mathcal{E}_n$. Hence, we have found $(2 N_n)^d$ disjoint balls with radius $\delta_n/2$ in $\mathcal{E}_n$.
Hence $\mathcal{E}_n$ has volume at least $2^d ((r_{x_0} / 4 \delta_n) - 1)^d \delta_n^{d} $ which has a strictly positive limit inferior. Hence,~\eqref{eq:int:pred:large} is proved. 

Finally, 
if 
$ \Esp{\left(Y(x_0) - M_{\agg,n}(x_0)\right)^2} \to 0$  as $n \to \infty$ for almost all $x_0 \in D$, then 
\[
\int_D \max \left( \Esp{\left(Y(x_0) - M_{\agg,n}(x)\right)^2} , 1 \right) dx \to_{n \to \infty} 0
\]
 from the dominated convergence theorem.
This is contradictory with the proof of~\eqref{eq:int:pred:large}.
Hence,~\eqref{eq:pred:large} is proved.
$\qquad$

\begin{lemma} \label{lemma:the:sequence}
There exists a sequence $(u_n)_{n \in \mathds{N}} \in D^{\mathds{N}}$, composed of pairwise distinct elements, such that
\begin{equation} \label{eq:in:lemma:dense}
\lim_{n \to \infty} \sup_{x \in D}\min_{i=1,...,n} | u_{i} - x | = 0,
\end{equation}
and such that for all $n$, 
\begin{equation} \label{eq:min:dist:ui}
\inf_{ \substack{1 \leq i,j \leq n \\ i \neq j \\ u_i,u_j \in B(x_0,r_{x_0} )}} |u_i - u_j| \geq 4 \delta_n.
\end{equation}
\end{lemma}
\begin{proof}
Such a sequence can be constructed, for instance, by the following random procedure. Let $D \subset B(0,R)$ for $R>0$ large enough. Define $u_1 \in D$ arbitrarily.
For $n=1,2,\ldots$: (1) if the set $ \mathcal{S}_n = \{u \in B(x_0,r_{x_0} ) ; \min_{i=1,\ldots,n} |u-u_i| > 4 \delta_{n+1} \}$ is non-empty, sample $u_{n+1}$ from the uniform distribution on $\mathcal{S}_n$. (2) If $\mathcal{S}_n$ is empty, sample $\tilde{u}_{n+1}$ from the uniform distribution on $B(0,R) \backslash B(x_0,r_{x_0} )$, and set $u_{n+1}$ as the projection of $\tilde{u}_{n+1}$ on $D \backslash B(x_0,r_{x_0} )$. One can see that~\eqref{eq:min:dist:ui} is satisfied by definition. Furthermore, one can show that~\eqref{eq:in:lemma:dense} holds almost surely.
Indeed, let $x \in B(x_0,r_{x_0} )$ and $\epsilon>0$,
and assume that with non-zero probability $B(x,\epsilon) \cap \{ u_i\}_{i \in \mathbb{N}} = \varnothing$. 
Then, the case (1) occurs infinitely often and, for each $i$ for which the case (1) occurs, there is a probability at least $ \epsilon^d / (2 r_{x_0})^d $ that $u_i \in B(x,\epsilon)$ (when $4 \delta_n \leq \epsilon / 2$). This yields a contradiction. Hence, for all $x \in B(x_0,r_{x_0} )$ and $\epsilon>0$, almost surely, $B(x,\epsilon) \cap \{ u_i\}_{i \in \mathbb{N}} \neq \varnothing$. We show similarly, for all $x \in D \backslash B(x_0,r_{x_0} )$ and $\epsilon>0$, almost surely, $B(x,\epsilon) \cap \{ u_i\}_{i \in \mathbb{N}} \neq \varnothing$. This show that~\eqref{eq:in:lemma:dense} holds almost surely. Hence, a fortiori, there exists a sequence $(u_n)_{n \in \mathds{N}} \in D^{\mathds{N}}$ satisfying the conditions of the lemma. 
  \qedperso
 \end{proof}

\begin{remark} \label{remark:clustering}
Consider the case $d=1$.
The proof of Proposition~\ref{prop:non:consistency} can be modified so that the partition $X_1,\ldots,X_{p_n}$ also satisfies $x \leq x'$ for any $x \in X_i$, $x' \in X_j$, $1 \leq i < j \leq p_n$. To see this, consider the same $X$ as in this proof. Let $X_1,\ldots,X_{p_n}$ have the same cardinality as in this proof, and let the $C_n$ smallest elements of $X$ be affected to $X_1$, the next $C_n$ smallest be affected to $X_2$ and so on. Then, one can show that there are at most $k_n+2$ groups containing elements of $(u_i)_{i \in \mathbb{N}} \cap B(x_0 , r_{x_0})  $ and at least $p_n - k_n -2$ groups containing only elements of $B(\bar{x},r_{\bar{x}})$. From these observations,~\eqref{eq:int:pred:large} and~\eqref{eq:pred:large} can be proved similarly as in the proof of Proposition~\ref{prop:non:consistency}. 
\end{remark}

\section{Proof of Proposition~\ref{prop:consistency}} \label{app:consistency}

Because $D$ is compact we have $\lim_{n \to \infty} \sup_{x \in D} \min_{i=1,...,n} || x_{ni} - x || = 0$. Indeed, if this does not hold, there exists $\epsilon>0$ and a subsequence $\phi(n)$ such that $\sup_{x \in D} \min_{i=1,...,\phi(n)} || x_{\phi(n)i} - x || \geq 2 \epsilon$. Hence, there exists a sequence, $x_{\phi(n)} \in D$ such that $\min_{i=1,...,\phi(n)} || x_{\phi(n)i} - x_{\phi(n)} || \geq \epsilon$. Since $D$ is compact, up to extracting a further subsequence, we can also assume that
$x_{\phi(n)} \to_{n \to \infty} x_{lim}$ with $x_{lim} \in D$. This implies that for all $n$ large enough, $\min_{i=1,...,\phi(n)} || x_{\phi(n)i} - x_{lim} || \geq \epsilon / 2$, which is in contradiction with the assumptions of the proposition.

Hence there exists a sequence of positive numbers $\delta_n$ such that $\delta_n \to_{n \to \infty} 0$ and such that for all $x \in D$ there exists a sequence of indices $i_n(x)$ such that $i_n(x) \in \{1,...,n\}$ and $||x - x_{n i_n(x)}|| \leq \delta_n$. There also exists a sequence of indices $j_n(x)$ such that $x_{ni_n(x)}$ is a component of $X_{j_n(x)}$. With these notations we have, since $M_1(x)$,..., $M_{p_n}(x)$, $M_{\agg}(x)$ are linear combinations with minimal square prediction errors,
\begin{eqnarray} \label{eq:nearest:neighboor}
\sup_{x \in D} \Esp{ \left( Y(x) - M_{\agg}(x) \right)^2}
& \leq &
\sup_{x \in D} \Esp{\left(Y(x) - M_{j_n(x)}(x)\right)^2}   \nonumber \\
& \leq & \sup_{x \in D} \Esp{\left(Y(x) - \Esp{Y(x) | Y(x_{ni_n(x)})}\right)^2}.
\end{eqnarray}

In the rest of the proof we essentially show that, for a dense triangular array of observation points, the Kriging predictor that predicts $Y(x)$ based only on the nearest neighbor of $x$ among the observation points has a mean square prediction error that goes to zero uniformly in $x$ when $k$ is continuous. We believe that this fact is somehow known, but we have not been able to find a precise result in the literature.
We have from~\eqref{eq:nearest:neighboor},
\begin{flalign*}
&
\sup_{x \in D} \Esp{
\left(
Y(x) - M_{\agg}(x)
\right)^2
}
& \\
& \leq \sup_{x \in D} \left[ \mathrm{1} \{ k(x_{ni_n(x)},x_{ni_n(x)}) = 0 \} k(x,x) + \mathrm{1} \{ k(x_{ni_n(x)},x_{ni_n(x)}) > 0 \}
\left( k(x,x) - \frac{k(x,x_{ni_n(x)})^2}{k(x_{ni_n(x)},x_{ni_n(x)})} \right) \right] & \\
& \leq \sup_{\substack{x,t \in D; \\ ||x-t|| \leq \delta_n}}
\left[ \mathrm{1} \{ k(t,t) = 0 \} k(x,x) + \mathrm{1} \{ k(t,t) > 0 \}\left( k(x,x) - \frac{k(x,t)^2}{k(t,t)} \right) \right]& \\
& =  \sup_{\substack{x,t \in D; \\ ||x-t|| \leq \delta_n}} F(x,t). &
\end{flalign*}
Assume now that the above supremum does not go to zero as $n \to \infty$. Then there exists $\epsilon>0$
and two sub-sequences $x_{\phi(n)}$ and $t_{\phi(n)}$ with values in $D$ such that $x_{\phi(n)} \to_{n \to \infty} x_{lim}$ and $t_{\phi(n)} \to_{n \to \infty} x_{lim}$, with  $x_{lim} \in D$ and such that $F(x_{\phi(n)},t_{\phi(n)}) \geq \epsilon$. If $k(x_{lim},x_{lim}) = 0$ then $F(x_{\phi(n)},t_{\phi(n)}) \leq k(x_{\phi(n)},x_{\phi(n)}) \to_{n \to \infty} 0$. If $k(x_{lim},x_{lim}) > 0$ then for $n$ large enough
\begin{eqnarray*}
F(x_{\phi(n)},t_{\phi(n)}) = k(x_{\phi(n)},x_{\phi(n)}) - \frac{k(x_{\phi(n)},t_{\phi(n)})^2}{k(t_{\phi(n)},t_{\phi(n)})}
\end{eqnarray*}
which goes to zero as $n \to \infty$ since $k$ is continuous. Hence we have a contradiction, which completes the proof.

\section{Proofs in Sect.~\ref{section:GP:perspective}}

First notice that denoting $k_\agg(x,x')  = \Cov{\Y_\agg(x), \Y_\agg(x')}$, we easily get for all $x, x' \in D$,
\begin{equation}
\begin{split}
	k_\agg(x,x') &= k(x,x') + 2 \transpose{k_M(x)} K_M^{-1}(x) K_M(x,x') K_M^{-1}(x') k_M(x') \\
	& \qquad- \transpose{k_M(x)} K_M^{-1}(x)k_M(x,x') - \transpose{k_M(x')} K_M^{-1}(x')k_M(x',x).
\end{split}
\label{eq:kagg}
\end{equation}
A direct consequence of~\eqref{eq:kagg} is $k_\agg(x,x) = k(x,x)$, and under the interpolation assumption~\ref{H2}, since $\Y_\agg(X) = \Y(X)$, $k_\agg(X,X) = k(X,X)$.

\begin{proof}[Proof of Proposition~\ref{prop:PriorPosterior}]
The interpolation hypothesis $\M_\agg(X) = \Y(X)$ ensures $\varepsilon'_\agg(X)=0$ so we have
	\begin{equation}
	\begin{split}
		\Esp{\Y_\agg(x) | \Y_\agg(X)} &= \Esp{\Y_\agg(x) | \M_\agg(X)+0} \\
		& = \Esp{\M_\agg(x) | \M_\agg(X)} + \Esp{\varepsilon'_\agg(x)  | \M_\agg(X)} \\
		& = \Esp{g_x(\Y(X)) | \Y(X)} + 0 \\
		& = \M_\agg(x).
	\end{split}
	\end{equation}
	The proof that $v_\agg$ is a conditional variance follows the same pattern:
	\begin{equation}
	\begin{split}
		\Var{\Y_\agg(x) | \Y_\agg(X)} &= \Var{\Y_\agg(x) | \M_\agg(X)} \\
		& = \Var{\M_\agg(x) | \M_\agg(X)} +  \Var{\varepsilon'_\agg(x)  }\\
		& = v_\agg(x).
	\end{split}
	\end{equation}
	  \qedperso
\end{proof}

\begin{proof}[Proof of Proposition \ref{prop:condCovYagg}]
Eq.~\eqref{Eq:CrossCov:Expr1} is the classical expression of Gaussian conditional covariances, based on the fact that $Y_\agg$ is Gaussian. Let us now prove Eq.~\eqref{Eq:CrossCov:Expr2}. For a component $x_k$ of the vector of points $X$, using the interpolation assumption, we have $M_\agg(x_k) = Y(x_k)$ and 
$$\Cov{Y_\agg(x),Y_\agg(x_k)}= \Cov{M_\agg(x)+ \varepsilon_\agg'(x), M_\agg(x_k)} = \Cov{M_\agg(x), Y(x_k)}.$$
Remark that
$\alpha_\agg(x)$ is the $p \times 1$ vector of aggregation weights of different sub-models at point $x$, so that $M_\agg(x)= \transpose{\alpha_\agg(x)}M(x)$ and 
$
k_\agg(x,x_k) = \transpose{\alpha_\agg(x)} \Cov{M(x), Y(x_k)}$.
We thus get
\begin{eqnarray}\label{eq:kaggxXfirst}
k_\agg(x,X) &=& \transpose{\alpha_\agg(x)} \Cov{M(x), Y(X)} \, .
\end{eqnarray}
Under the linearity assumption, there exists a $p \times n$ deterministic matrix $\Lambda(x)$ such that $M(x)=\Lambda(x) Y(X)$. Thus $k_\agg(x,X) = \transpose{\alpha_\agg(x)} \Lambda(x) k(X,X)$. 
As remarked in Sect. \ref{section:rulliere}, because of the interpolation condition, $k_\agg(X,X)=k(X,X)$ and\begin{eqnarray}
k_\agg(x,X) k_\agg(X,X)^{-1} k_\agg(X,x') = \transpose{\alpha_\agg(x)} \Lambda(x) k(X,X) \transpose{\Lambda(x')} \alpha_\agg(x') \, .
\end{eqnarray}
Using $K_M(x,x')=\Cov{M(x),M(x')} = \Lambda(x) k(X,X) \transpose{\Lambda(x')}$, we get 
\begin{eqnarray}
k_\agg(x,X) k_\agg(X,X)^{-1} k_\agg(X,x') = \transpose{\alpha_\agg(x)} K_M(x,x') \alpha_\agg(x') \, .\label{eq:simpli1}
\end{eqnarray}
At last, starting from Eq.~\eqref{Eq:CrossCov:Expr1} and using both Eqs~\eqref{eq:kagg} and~\eqref{eq:simpli1}, we get  Eq.~\eqref{Eq:CrossCov:Expr2}.\\
Finally, the development of $\Esp{\left(Y(x)-M_\agg(x)\right)\left(Y(x')-M_\agg(x')\right)}$ leads to the right hand side of Eq.~\eqref{Eq:CrossCov:Expr2} so that 
\[
\Esp{\left(Y(x)-M_\agg(x)\right)\left(Y(x')-M_\agg(x')\right)} = c_\agg(x,x') 
\]
 and Eq.~\eqref{Eq:CrossCov:Expr3} holds.
  \qedperso
\end{proof}

\section{Proofs in Sect.~\ref{sec:bounds}}

\begin{proof}[Proof of Proposition~\ref{prop:bounds}]
Consider $\Delta(x)$ as defined in Eq.~\eqref{eq:diffaggfull}. From Eq.~\eqref{eq:kaggxXfirst}, using both the linear and the interpolation assumptions, we get $k(x,X)\Delta(x) = \left[ k(x,X) - k_\agg(x,X) \right] k(X,X)^{-1}$. Injecting this result in Eq.~\eqref{eq:diffaggfull}, we have 
\begin{equation}\label{eq:MaMfullFunctionkkagg}
\M_\agg(x)-\M_\full(x)= [k_\agg(x,X)-k(x,X)] k(X,X)^{-1} Y(X)
\end{equation}
 and the first equality holds. From~\eqref{eq:diffaggfull}, we also get $v_\agg(x)-v_\full(x)=k(x,X)k(X,X)^{-1}k(X,x)-k_\agg(x,X)k(X,X)^{-1}k_\agg(X,x)$ and the second equality holds. Note that under the same assumptions, we can also use $k_\agg(X,X)=k(X,X)$ and $k_\agg(x,x)=k(x,x)$ and start from $M_\agg=k_\agg(x,X)k_\agg(X,X)^{-1}Y(X)$ and $v_\agg(x)= k_\agg(x,x)-k_\agg(x,X)k_\agg(X,X)^{-1}k_\agg(X,x)$ to get the same results. 

Let us now show Eq.~\eqref{eq:va-vfullbounds}. The upper bound comes from the fact that $\M_\agg(x)$ is the best linear combination of $M_k(x)$ for $k \in \set{1,\ldots, p}$. The positivity of $v_\agg - v_\full$ can be proved similarly: $M_\agg(x)$ is a linear combination of $Y(x_k)$, $k \in \set{1, \ldots, n}$, whereas $M_\full(x)$ is the best linear combination.
Notice that $v_\agg(x)-v_\full(x) \ge 0$ implies, using Eq.~\eqref{eq:diff}, that $\norm{k_\agg(X,x)}_K\le \norm{k(X,x)}_K$.
Let us show Eq.~\eqref{eq:boundsnorms}.
We get the result starting from Eq.~\eqref{eq:MaMfullFunctionkkagg}, applying Cauchy-Schwartz inequality. The bound on $v_\agg(x)-v_\full(x)$ directly derives from Eq.~\eqref{eq:diff}, using $\norm{k_\agg(X,x)}_K\le \norm{k(X,x)}_K$.

Finally, the classical inequality between $\norm{.}_K$ and  $\norm{.}$ derives from the diagonalization of $k(X,X)$, one can notice that it depends on $n$ and $X$, but it does not depend on the prediction point $x$.
  \qedperso
\end{proof}

\begin{proof}[Proof of Remark~\ref{rem:far}]
Using $\norm{k_\agg(X,x)}_K\le \norm{k(X,x)}_K$, using the equivalence of norms and triangular inequality, assuming that the smallest eigenvalue $\lambda_{\min}$ of $k(X,X)$ is non zero, bounds~\eqref{eq:boundsnorms} in the previous Proposition~\ref{prop:bounds} implies that
\begin{equation}\label{eq:boundsnormsGrossier}
		\accolade{
			\abs{\M_\agg(x)-M_\full(x)} & \le & \frac{2}{\lambda_{\min}}\norm{k(X,x)} \norm{Y(X)} \virguleacc\\
			\abs{v_\agg(x)-v_\full(x)} & \le & \frac{1}{\lambda_{\min}}\norm{k(X,x)}^2 \pointacc
		}
\end{equation}
Noticing that the $\norm{.}_K$ and $\lambda_{\min}$ do not depend on $x$ (although they depend on $X$ and $n$), the result holds.
  \qedperso
\end{proof}

\begin{proof}[Proof of Remark~\ref{rem:pointEquivalence}]
As $\Lambda(x)$ is $n \times n$ and invertible, we have
\[
\transpose{k_M(x)} K_M(x)^{-1} M(x)
= \transpose{k(x,X)} \transpose{\Lambda(x)} ( \Lambda(x) k(X,X) \transpose{ \Lambda(x) } )^{-1} \Lambda(x) Y(x)
= M_{\full}(x), 
\]
and similarly $\vv_{\agg}(x) = \vv_{\full}(x)$.
As $\M_\agg = \M_\full$, we have $\Y_\agg = \M_\full + \varepsilon$ where $\varepsilon$ is an independent copy of $\Y - \M_\full$. Furthermore $\Y = \M_\full + \Y - \M_\full$ where $\M_\full$ and $\Y - \M_\full$ are independent, by Gaussianity, so $\Y_\agg \stackrel{law}{=} \Y$.
  \qedperso
\end{proof}

\section{{Proofs in Sect.~\ref{section:extensions}}}

\begin{proof}[\fb{Proof of Proposition \ref{prop:consistency:noisy}}]
\fb{
Because $M_{\agg,\eta}(x)$ is the best linear predictor of $Y(x)$, for $n \in \mathbb{N}$, we have
\begin{equation} \label{eq:for:proof:noisy:one}
\mathbb{E} \left[
\left(
Y(x) - M_{\agg,\eta}(x)
\right)^2
\right]
\leq 
\mathbb{E} \left[
\left(
Y(x) - M_{\eta,i_n}(x)
\right)^2
\right].
\end{equation}
Let $\epsilon >0$.
Let $N_n$ be the number of points in $X_{i_n}$ that are at Euclidean distance less than $\epsilon$ from $x$. By assumption, $N_n \to \infty$ as $n \to \infty$. Let us write these points as $x_{nj_1},\ldots,x_{nj_{N_n}}$, with corresponding measurement errors $\xi_{j_1},\ldots,\xi_{j_{N_n}}$. Since $M_{\eta,i_n}(x)$ is the best linear unbiased predictor of $Y(x)$ from the elements of $Y(x_{n j_1}) + \xi_{j_1} , \ldots ,Y(x_{n j_{N_n}}) + \xi_{j_{N_n}} $, we have
\begin{equation} \label{eq:for:proof:noisy:two}
\mathbb{E} \left[
\left(
Y(x) - M_{\eta,i_n}(x)
\right)^2
\right]
\leq 
\mathbb{E} \left[
\left(
Y(x) - \frac{1}{N_n} \sum_{a=1}^{N_n} ( Y(x_{nj_a}) + \xi_{j_a} )
\right)^2
\right].
\end{equation}
By independence of $Y$ and $\xi_X$, we obtain
\begin{align*}
    \mathbb{E} \left[
\left(
Y(x) - \frac{1}{N_n} \sum_{a=1}^{N_n} ( Y(x_{nj_a}) + \xi_{j_a} )
\right)^2
\right]
& = 
    \mathbb{E} \left[
\left(
\frac{1}{N_n} \sum_{a=1}^{N_n}
(Y(x) -   Y(x_{nj_a}) )
\right)^2
\right]
+
    \mathbb{E} \left[
\left(
\frac{1}{N_n} \sum_{a=1}^{N_n}  \xi_{j_a} 
\right)^2
\right]
\\
& \leq 
\left(
\max_{a=1,\ldots,N_n}
    \mathbb{E} \left[
(Y(x) -   Y(x_{nj_a}) )^2
\right]
\right)
+
\frac{\sum_{a=1}^{N_n} \eta_a}{N_n^2}.
\end{align*}
The above inequality follows from Cauchy-Schwarz, the fact that $Y$ has mean zero and the independence of $\xi_{j_1},\ldots,\xi_{j_{N_n}}$. We then obtain, since $(\eta_a)_{a \in \mathbb{N}}$ is bounded,
\begin{align*}
\limsup_{n \to \infty} 
\mathbb{E} \left[
\left(
Y(x) - \frac{1}{N_n} \sum_{a=1}^{N_n} ( Y(x_{nj_a}) + \xi_{j_a} )
\right)^2
\right]
& \leq 
\sup_{\substack{ u \in D \\ ||u - x|| \leq \epsilon } }    \mathbb{E} \left[
(Y(x) -   Y(u) )^2
\right]
\\
& = 
\sup_{\substack{ u \in D \\ ||u - x|| \leq \epsilon } }  
\left(
 k(x,x) + k(u,u)
 - 2 k(x,u)
\right).
\end{align*}
From \eqref{eq:for:proof:noisy:one} and \eqref{eq:for:proof:noisy:two}, we have, for any $\epsilon >0$, 
\begin{equation} 
\label{eq:for:proof:noisy:lim:sup}
\limsup_{n \to \infty} 
\mathbb{E} \left[
\left(
Y(x) - M_{\agg,\eta}(x)
\right)^2
\right]
\leq 
\sup_{\substack{ u \in D \\ ||u - x|| \leq \epsilon } }  
\left(
 k(x,x) + k(u,u)
 - 2 k(x,u)
\right). 
\end{equation}
The above display goes to zero as $\epsilon \to 0$ because $k$ is continuous. Hence the $\limsup$ in \eqref{eq:for:proof:noisy:lim:sup} is zero, which concludes the proof.   
  \qedperso
}

\end{proof}

\begin{proof}[\fb{Proof of Lemma \ref{lemma:consistency:noisy:random:points}}]
\fb{
Let $\epsilon >0$. For $n \in \mathbb{N}$, let $N_n$ be the number of points in $\{x_1,\ldots,x_n \}$ that are at Euclidean distance less than $\epsilon$ to $x$. Because $x$ is in the interior of $D$ and because $g >0$ on $D$, we have $p_{\epsilon} = \mathbb{P}( ||x_1 - x  || \leq \epsilon) >0$. Hence from the law of large number, almost surely, for $n$ large enough, $N_n \geq (p_{\epsilon}/2) n $. For each $n \in \mathbb{N}$, the $N_n$ points in $\{x_1,\ldots,x_n \}$ that are at Euclidean distance less than $\epsilon$ to $x$ are partitioned into $p_n$ classes. Hence, one of these classes, say the class $X_{i_n}$,  contains a number of points larger or equal to $ N_n / p_n $. Since $n / p_n$ goes to infinity by assumption, we conclude that the number of points in $X_{i_n}$ at distance less than $\epsilon$ from $x$ goes to infinity, almost surely. This concludes the proof.
\qedperso
}
\end{proof}

\begin{proof}[\fb{Proof of Proposition \ref{prop:non:consistency:noisy}}]
\fb{
The proof is based on the same construction of the triangular array of observation points and of the sequence of partitions as in the proof of Proposition \ref{prop:non:consistency}. We take $x$ as $x_0$ in this proof. 
Only a few comments are needed.

We let $V(\delta)$ be as in the proof of Proposition \ref{prop:non:consistency} and we remark that for any $\delta>0$, for any $r \in \mathbb{N}$, for any Gaussian vector $(U_1,\ldots,U_{r})$ independent of $Y$ and for any $u_0,u_1,\ldots,u_r \in D$ with $||u_i - u_0|| \geq \delta$ for $i=1,\ldots,r$, we have
\[
V[ Y(u_0) |  Y(u_1) + U_{1},\ldots,Y(u_r) + U_{r}]
\geq 
V[ Y(u_0) |  Y(u_1),U_{1},\ldots,Y(u_r) , U_{r}]
=
V[ Y(u_0) |  Y(u_1) ,\ldots,Y(u_r) ]
\geq V(\delta).
\]

We also remark that the triangular array and sequence of partitions of the proof of Proposition \ref{prop:non:consistency} do satisfy the condition of Proposition \ref{prop:consistency:noisy}. Indeed, the first component $X_1$ of the partition, with cardinality $C_n \to \infty$, is dense in $D$. 

We remark that for $k=k_n+1,\ldots,p_n$ (notations of the proof of Proposition \ref{prop:non:consistency}), for any row of $X_k$, of the form $x_{nb}$ with $b \in \{ 1 , \ldots,n\}$, we have
$v_k(x) \leq V[ Y(x) |  Y(x_{nb}) + \xi_{b} ] \leq k(x,x) - k(x,x_{nb})^2 / (k(x_{nb},x_{nb}) + \eta_{b})$. Hence, because $(\eta_i)_{i \in \mathbb{N}}$ is bounded, there is a fixed $\epsilon'_2 >0$ such that for $k=k_n+1,\ldots,p_n$, $\epsilon_1 \leq v_k(x) \leq k(x,x) - \epsilon'_2$, with $\epsilon_1$ as in the proof of Proposition \ref{prop:non:consistency}.

With these comments, the arguments of the proof of Proposition \ref{prop:non:consistency} lead to the conclusion of Proposition \ref{prop:non:consistency:noisy}.
\qedperso
}
\end{proof}

\begin{proof}[\fb{Proof of Proposition \ref{prop:universal:kriging}}]
\fb{
 We can see that $M_{\text{UK},i}(x) = w_i(x)^t Z(X_i) $ for $i=1,\ldots,p$. Hence, for $i,j=1,\ldots,p$,
\[
\Cov{ M_{\text{UK},i}(x) , M_{\text{UK},j}(x) }
=
w_i(x)^t
\Cov{ Z(X_i) , Z(X_j) }
w_j(x)
=
w_i(x)^t
k(X_i,X_j)
w_j(x).
\]
Hence, $\Cov{ M_{\text{UK}}(x) } =  K_{\text{UK},M}(x) $. Furthermore, for $i = 1 , \ldots ,p$,
\[
\Cov{ M_{\text{UK},i}(x) , Z(x) }
=
w_i(x)^t
\Cov{ Z(X_i) , Z(x) }
=
w_i(x)^t
k(X_i,x).
\]
Hence, $\Cov{ M_{\text{UK}}(x) , Z(x) } =  k_{\text{UK},M}(x) $.
 Let
\begin{equation} \label{eq:the:constrained:optim:problem}
\alpha(x)
= 
\underset{
\substack{
\gamma \in \mathbb{R}^p \\ 
\mathbb{E}[ \gamma^t M_{\text{UK}}(x) ] = \mathbb{E}[ Z(x) ] \\
\text{for any value of $\beta$ in $\mathbb{R}^m$}
}
}{
\mathrm{argmin}
}
\mathbb{E}
\left[
\left(
\gamma^t M_{\text{UK}}(x)
-
Z(x)
\right)^2
\right].
\end{equation}
Since $\mathbb{E}[  M_{\text{UK},i}(x) ] = \mathbb{E}[ Z(x) ]$ for $i=1,\ldots,p$ and for any value of $\beta \in \mathbb{R}^m$, the constraint in \eqref{eq:the:constrained:optim:problem} can be written as $\gamma^t \mathrm{1}_p  \mathbb{E}[ Z(x)] = \mathbb{E}[ Z(x) ] $ that is $\gamma^t \mathrm{1}_p = 1$. The mean square prediction error in \eqref{eq:the:constrained:optim:problem} can be written as
\[
k(x,x) + \gamma^t K_{\text{UK},M}(x) \gamma
- 2 \gamma^t  k_{\text{UK},M}(x).
\]
Thus \eqref{eq:the:constrained:optim:problem} becomes
\[
\alpha(x)
= 
\underset{
\substack{
\gamma \in \mathbb{R}^p \\ 
\gamma^t \mathrm{1}_p = 1
}
}{
\mathrm{argmin}
}
\left(
k(x,x) + \gamma^t K_{\text{UK},M}(x) \gamma
- 2 \gamma^t  k_{\text{UK},M}(x)
\right).
\]

We recognize the optimization problem of ordinary Kriging which corresponds to universal Kriging with an unknown constant mean function \citep{sacks89design,chiles2009geostatistics}. Hence, we have 
\[
\alpha(x)^t
M_{\text{UK}}(x)
=
\hat{m}_{\text{UK},M}(x)
+
k_{\text{UK},M}(x)^t
K_{\text{UK},M}(x)^{-1}
\left(
M_{\text{UK}}(x) 
-
\hat{m}_{\text{UK},M}(x)
\mathrm{1}_p
\right),
\]
from for instance \cite{sacks89design,chiles2009geostatistics}. 
Hence we have $\alpha(x)^t M_{\text{UK}}(x)  = 
M_{\agg,\text{UK}}(x)$, the best linear predictor described in Proposition \ref{prop:universal:kriging}.

We can see that $M_{\agg,\text{UK}}(x) = \alpha_{\agg,\text{UK}}(x)^t M_{\text{UK}}(x) $  and that $\alpha_{\agg,\text{UK}}(x) = \did{\alpha(x)}$. Then since $\mathbb{E}[ \alpha_{\agg,\text{UK}}(x)^t
M_{\text{UK}}(x) ] = Z(x)$, from $\Cov{ M_{\text{UK}}(x) } =  K_{\text{UK},M}(x) $ and from $\Cov{ M_{\text{UK}}(x) , Z(x) } =  k_{\text{UK},M}(x) $, we obtain  
\[
\mathbb{E}
\left[
\left(
M_{\agg,\text{UK}}(x)
- 
Z(x)
\right)^2
\right] 
 =
k(x,x) 
+
\alpha_{\agg,\text{UK}}(x)^t
K_{\text{UK},M}(x)
\alpha_{\agg,\text{UK}}(x)
 -
2 
\alpha_{\agg,\text{UK}}(x)^t
k_{\text{UK},M}(x). 
\]
This concludes the proof.
\qedperso{} 
} 
\end{proof}
\end{document}